\documentclass[11pt,reqno]{amsart}

\usepackage[T1]{fontenc}
\usepackage[utf8]{inputenc}
\usepackage{amsmath,amssymb,mathtools}
\usepackage{newpxtext,newpxmath}
\usepackage[letterpaper,top=1.0in,bottom=0.98in,left=1.16in,right=1.16in,
            headheight=15pt]{geometry}
\usepackage[final]{microtype}
\usepackage{enumitem}
\usepackage{cite}
\usepackage{titlesec}
\usepackage{hyperref}
\hypersetup{
  hidelinks,
  pdftitle={Foliations and Laminar Currents for C2 Maximal Plurisubharmonic Functions},
  pdfauthor={Yizhe Yang, Zhanpeng You}
}
\titleformat{\section}[hang]
  {\normalfont\large\bfseries}{\thesection.}{0.68em}{}
\titlespacing*{\section}{0pt}{3.6ex plus 0.8ex minus 0.25ex}{2.5ex plus 0.3ex}
\titleformat{\subsection}[hang]
  {\normalfont\normalsize\bfseries}{\thesubsection.}{0.65em}{}
\titlespacing*{\subsection}{0pt}{2.6ex plus 0.5ex minus 0.2ex}{1.1ex plus 0.2ex}
\numberwithin{equation}{section}
\newtheoremstyle{plainblack}
  {1.0ex}{1.0ex}{\itshape}{}{\bfseries}{.}{0.55em}{}
\theoremstyle{plainblack}
\newtheorem{theorem}{Theorem}[section]
\newtheorem{lemma}[theorem]{Lemma}
\newtheorem{proposition}[theorem]{Proposition}
\newtheorem{corollary}[theorem]{Corollary}
\theoremstyle{definition}
\newtheorem{definition}[theorem]{Definition}
\theoremstyle{remark}
\newtheorem{remark}[theorem]{Remark}

\newcommand{\C}{\mathbb C}
\newcommand{\R}{\mathbb R}
\newcommand{\D}{\mathbb D}
\newcommand{\T}{\mathbb T}
\newcommand{\ddc}{dd^c}
\DeclareMathOperator{\PSH}{PSH}
\DeclareMathOperator{\Lip}{Lip}
\DeclareMathOperator{\rank}{rank}
\DeclareMathOperator{\supp}{supp}
\DeclareMathOperator{\loc}{loc}
\newcommand{\area}{\,dA}
\newcommand{\norm}[1]{\left\lVert#1\right\rVert}

\newcommand{\hb}{\widehat H}
\newcommand{\kb}{\widehat K}

\begin{document}

\title[Foliations and laminar currents for $C^2$ maximal psh functions]
{Foliations and Laminar Currents for
$C^2$ Maximal Plurisubharmonic Functions}
\author[Y. Yang]{Yizhe Yang}
\address{School of Mathematical Sciences, University of Chinese Academy of Sciences,
Beijing 100049, P. R. China}
\email{yangyizhe24@mails.ucas.ac.cn}

\author[Z. You]{Zhanpeng You}
\address{School of Mathematical Sciences, University of Chinese Academy of Sciences,
Beijing 100049, P. R. China}
\email{youzhanpeng23@mails.ucas.ac.cn}

\begin{abstract}
Let $u\in C^2(\Omega)\cap\PSH(\Omega)$ satisfy
$(\ddc u)^2=0$ on a domain $\Omega\subset\C^2$.
We prove that the positive-rank locus of its Levi form carries a
foliation by holomorphic curves tangent to the Levi-null directions,
along which $u$ is harmonic. The flow boxes are locally bi-Lipschitz;
in particular, the Levi-null slope is locally Lipschitz in the
transverse variable. The Levi current is uniformly laminar on the positive-rank locus,
with an explicit transverse measure, and laminar on $\Omega$. A smooth example also shows that the Levi current can be laminar
without being uniformly laminar on the whole domain. The analytic
construction combines a local partial Legendre transform with a
Hardy-space boundary equation.
\end{abstract}

\subjclass[2020]{Primary 32U15; Secondary 32W20, 32U40}
\keywords{Homogeneous complex Monge--Amp\`ere equation, maximal
plurisubharmonic functions, foliation, uniformly laminar
current, laminar current, partial Legendre transform}
\maketitle

\section{Introduction}\label{sec:introduction}

The homogeneous complex Monge--Amp\`ere equation provides a natural
link between pluripotential theory and the geometry of holomorphic
curves. Let $\Omega\subset\C^2$ and let
$u\in C^2(\Omega)\cap\PSH(\Omega)$ satisfy
\begin{equation}\label{eq:hcma-intro}
 (\ddc u)^2=0.
\end{equation}
At every point where the Levi form of $u$ is nonzero, it has complex
rank one, and its kernel defines a complex line field. A natural
geometric question is whether this line field is tangent to a
foliation by holomorphic curves along which $u$ is harmonic.
Closely related is the laminar structure of the positive closed
current $T=\ddc u$: under what regularity assumptions is $T$
laminar or uniformly laminar?

For more regular potentials, Bedford and Kalka \cite{BK} established
the relationship between the homogeneous complex Monge--Amp\`ere
equation and holomorphic curve foliations. If $u\in C^3$, the
Levi-null distribution is $C^1$ on the positive-rank locus, and
Frobenius theory yields a foliation by holomorphic curves along
which $u$ is harmonic. The associated Levi current admits a
uniformly laminar representation on this locus. Further work on
Monge--Amp\`ere foliations includes \cite{Freeman,KP,CT,RWN}.

At $C^2$ regularity, Kruzhilin \cite{Kru} established a foliation
theorem for rotationally symmetric potentials on complete Hartogs
domains. More precisely, assuming that the Levi form is nowhere
zero and $u(z,w)=u(|z|,w)$, he proved the existence of a foliation
by holomorphic curves along which $u$ is harmonic. Dujardin and
Guedj \cite[Question~3.17]{DG} asked whether the Levi current of a
general $C^2$ maximal plurisubharmonic potential must be uniformly
laminar.

The main difficulty in the general $C^2$ case is that the Levi-null
line field is only continuous. Indeed, in local coordinates with
$u_{z\bar z}>0$, a holomorphic graph $z=h(w)$ is tangent to this
field precisely when
\begin{equation}\label{eq:leafode-intro}
 h'(w)=-\overline{\left(\frac{u_{z\bar w}}{u_{z\bar z}}\right)(h(w),w)}.
\end{equation}
The right-hand side is in general merely continuous, so the
classical Frobenius theorem does not directly supply holomorphic
solutions.

At lower regularity, Sibony's construction yields maximal
plurisubharmonic potentials of class $C^{1,1}$ whose Levi currents
are not uniformly laminar; whether these currents are laminar
remains unknown (see \cite{DG,Dujardin}). Dujardin
\cite{Dujardin} constructed maximal potentials of class
$C^{1,\alpha}$ for every $0<\alpha<1$, whose Levi currents are
not laminar: their supports contain no holomorphic disks.
These examples underscore the role of regularity in the geometry
of Levi currents.

We study the Levi currents of general $C^2$ maximal
plurisubharmonic potentials. Our main result gives a holomorphic
foliation of the positive-rank locus with quantitative transverse
regularity and an associated uniformly laminar current
representation. We use the convention $\ddc=i\partial\bar\partial$.

\begin{theorem}\label{thm:main}
Let $u\in C^2(\Omega)\cap\PSH(\Omega)$ satisfy
$(\ddc u)^2=0$, and let
\[
 T=\ddc u,\qquad
 \mathcal R=\{x\in\Omega:T(x)\ne0\},\qquad
 \mathcal Z=\Omega\setminus\mathcal R.
\]
Then the following hold.
\begin{enumerate}[label=\textup{(\roman*)},leftmargin=2.3em]
\item \emph{Foliation and uniqueness.}
For every $p\in\mathcal R$, there are local holomorphic coordinates
$(z,w)$ centered at $p$ and a neighborhood $W\Subset\mathcal R$
admitting a locally bi-Lipschitz flow box
\[
 \Phi:\Delta_s\times\Delta_{\rho/2}\longrightarrow W,
 \qquad \Phi(\xi,w)=(h_\xi(w),w),
\]
where $\Delta_r=\{\zeta\in\C:|\zeta|<r\}$, and the
$h_\xi$ are holomorphic on $\Delta_\rho$, satisfy
$h_\xi(0)=\xi$, and have pairwise disjoint graphs over
$\Delta_{\rho/2}$. These plaques are tangent to $\ker T$, and
$u$ is harmonic on each plaque. Through each point of $\mathcal R$
there is a unique germ of a nondegenerate holomorphic curve on which
$u$ is harmonic.
\item \emph{Transverse Lipschitz regularity.}
On a smaller positive-rank flow box with $a=u_{z\bar z}>0$,
the coefficient $q=u_{z\bar w}/u_{z\bar z}$ is locally
Lipschitz in the transverse $z$ variable, uniformly on smaller
horizontal slices: for a suitable constant $C>0$,
\[
 |q(z,w)-q(z',w)|\le C|z-z'|.
\]
\item \emph{Uniform laminarity and transverse measure.}
In each such flow box, writing
$\Gamma_\xi=\{(h_\xi(w),w):|w|<\rho/2\}$ and
$a=u_{z\bar z}>0$ in the adapted coordinates, one has
\begin{equation}\label{eq:currentintro}
 T|_W=\int_{\Delta_s}[\Gamma_\xi] \,2a(\xi,0)\,dA(\xi).
\end{equation}
Here $dA$ is the Lebesgue area measure on $\C$.
In particular, $T|_{\mathcal R}$ is uniformly laminar,
subordinate to the positive-rank foliation, and these transverse
measures are invariant under the holonomy of the foliation.
\end{enumerate}
\end{theorem}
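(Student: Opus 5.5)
The plan follows the abstract: a local partial Legendre transform in the transverse variable converts the problem into one about holomorphic families of maps, and a Hardy-space boundary equation produces the leaves.

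\textbf{Step 1: normalize and transform.} Fix $p\in\mathcal R$ and choose linear coordinates with $a=u_{z\bar z}>0$ near $p$. Since $(\ddc u)^2=0$, the Levi form has rank one and its kernel is spanned by $(-\bar q,1)$ with $q=u_{z\bar w}/u_{z\bar z}$. For fixed $w$, the map $z\mapsto u_z(z,w)$ has real Jacobian determinant $|u_{zz}|^2-a^2$, which need not be nonzero. So I would instead use the map $z\mapsto \partial_{\bar z}$ of $u$ after subtracting a pluriharmonic correction $\mathrm{Re}(c z^2)$ chosen so that $u_{zz}(p)=0$. Then $u_{zz}$ is small near $p$, and $\sigma=u_z(\cdot,w)$ is a $C^1$ local diffeomorphism in $z$ with Jacobian close to $-a^2$.

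Set $z=Z(\sigma,w)$ for the inverse. Differentiating shows that the maximality equation becomes the statement that $w\mapsto Z(\sigma,w)$ is holomorphic along the level sets of a real quantity. More precisely, holomorphic curves on which $u_z$ is constant are exactly Levi-null curves in rank one; this is the classical Bedford--Kalka observation. It would give the candidates $h_\xi(w)=Z(\sigma_\xi,w)$ with $\sigma_\xi=u_z(\xi,0)$.

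The catch is that holomorphy of $w\mapsto Z(\sigma,w)$ is only known distributionally, because $Z$ is merely $C^1$.

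\textbf{Step 2: holomorphy of the leaves, the main obstacle.} I expect the real difficulty to be showing that $w\mapsto Z(\sigma,w)$ is holomorphic when only $C^1$ data are available. The Frobenius argument of \cite{BK} needs a $C^1$ line field, which we lack.

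My approach: fix $\sigma$ and work on a small disc $\Delta_\rho$ in $w$. Restrict to the boundary circle, where $Z(\sigma,\cdot)$ is continuous, and solve the equation for a Hardy-space function $h\in H^\infty(\Delta_\rho)$ whose boundary values satisfy $u_z(h(w),w)=\sigma$. I would use a Riemann--Hilbert/contraction scheme, exploiting the smallness of $u_{zz}$ near $p$ so that the nonlinear boundary operator is close to a linear one.

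Then show that the disc $w\mapsto(h(w),w)$ has $u$ subharmonic along it and that $u_z-\sigma$ vanishes on it. Since $(\ddc u)^2=0$, this forces the disc to be tangent to $\ker T$ and $u$ to be harmonic along it; comparing with the constancy of $u_z$ then identifies the disc with the graph of $Z(\sigma,\cdot)$. The graphs are pairwise disjoint because $\sigma\mapsto Z(\sigma,w)$ is injective.

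For uniqueness, any nondegenerate holomorphic curve on which $u$ is harmonic must be Levi-null, hence satisfies \eqref{eq:leafode-intro}. It must also keep $u_z$ constant (since $\partial_t u_z$ along a null direction is a Levi-form pairing), so it lies in a level set of $\sigma$, that is, in a leaf.

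\textbf{Step 3: Lipschitz regularity and the current formula.} Reparametrizing by $\xi$ instead of $\sigma$ gives $\Phi$. The map $\Phi$ is $C^0$ and injective. It is bi-Lipschitz because $\sigma\mapsto\xi$ is a $C^1$ diffeomorphism, and because the holomorphic family $h_\xi$ has Cauchy estimates: $|h_\xi'-h_{\xi'}'|\le C\sup|h_\xi-h_{\xi'}|\le C'|\xi-\xi'|$ on $\Delta_{\rho/2}$, which follows from the Lipschitz dependence of $Z$ on $\sigma$.

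For (ii), $q(h_\xi(w),w)=-\overline{h_\xi'(w)}$. The Cauchy estimate above, combined with the bi-Lipschitz inverse of $\Phi$, gives $|q(z,w)-q(z',w)|\le C|z-z'|$.

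For (iii), compute $T$ on a test form $\phi$ by the change of variables $\Phi$. We have $T=2a\,\frac{i}{2}\theta\wedge\bar\theta$ with $\theta=dz+\bar q\,dw$. The pullback of $\theta$ to a leaf vanishes, and transversally $\theta$ restricts to $d\xi\cdot\partial_\xi h_\xi$. The factor $a\,|\partial_\xi h_\xi|^2$ is constant along each leaf, which follows from the invariance of $u_z$ and the identity $d\sigma=a\,d\bar z+u_{zz}\,dz$ on the leaf family. Evaluating at $w=0$ gives the density $2a(\xi,0)\,dA(\xi)$.

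Holonomy invariance follows because the same computation in any other transversal yields the same current. Uniform laminarity on $\mathcal R$ is immediate from \eqref{eq:currentintro}.
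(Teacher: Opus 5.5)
There is a genuine gap, and it is in the foundation of Steps 1--2. Levi-null holomorphic curves are \emph{not} the holomorphic curves on which $u_z$ is constant. Along a graph $w\mapsto(h(w),w)$ one has $\partial_{\bar w}\bigl[u_z(h(w),w)\bigr]=a\bigl(\overline{h'(w)}+q\bigr)$. So the null condition $h'=-\bar q$ only says that $u_z$ is \emph{holomorphic} along the leaf; the other derivative $u_{zz}h'+u_{zw}$ need not vanish.

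A concrete counterexample is $u=|z(1+w)|^2$ near $(1,0)$, where $a=1$ and $u_{zz}\equiv0$. The leaves are $z=\xi/(1+w)$, and along them $u_z=\bar\xi(1+w)$ is not constant. The level sets of $u_z=\bar z|1+w|^2$ are $z=\bar\sigma/|1+w|^2$, which are not holomorphic.

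Consequently:
\begin{itemize}
\item Your candidates $h_\xi=Z(\sigma_\xi,\cdot)$ are not leaves, and the holomorphy you set out to prove in Step~2 is false in general.
\item Your boundary problem is overdetermined. Imposing $u_z(h(w),w)=\sigma$ on the circle prescribes the full complex boundary value $h=Z(\sigma,\cdot)$ for a single holomorphic unknown. In the example this is $\xi/|1+\rho e^{it}|^2$, which has negative Fourier modes for $\xi\ne0$, so no holomorphic solution exists.
\item Your uniqueness argument (``it must also keep $u_z$ constant'') rests on the same false premise.
\item So does your reason why the transverse density is constant along leaves.
\end{itemize}

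What is missing is the correct two-unknown problem and a mechanism to push it inward. You need holomorphic $H,K$ on the disk with $K=u_z(H,\rho\zeta)$ imposed only on $|\zeta|=1$. With $u_z=\bar z+E$, projecting onto negative frequencies turns this into the contraction $g=-\overline{P_-\widetilde E(\xi+g,\rho e^{it})}$ on $H^2_0$. Rotational difference quotients then upgrade the fixed point to $W^{1,2}$, so the disks stay in the local domain, and $K$ is read off from the nonnegative frequencies.

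You then need the relation inside the disk. Subharmonicity of $u$ along the disk, which you invoke, is automatic and gives nothing. The paper uses the partial Legendre dual $v$, which is plurisubharmonic precisely because $(\ddc u)^2=0$. The Fenchel gap $u(H,\rho\zeta)+v(K,\rho\zeta)-2\operatorname{Re}(HK)$ is nonnegative, subharmonic, and zero on the circle, hence identically zero. This forces $K=u_z(H,\rho\zeta)$ in the interior and harmonicity of $u$ on the graph. You mention a Legendre transform but never use $v$ or the Fenchel inequality, and that is the key idea.

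Downstream, two further repairs are needed:
\begin{itemize}
\item Uniqueness must go through the transverse Lipschitz bound on $q$, followed by Gronwall on a real line and the identity theorem. Your Cauchy-estimate sketch for (ii) has the right shape once it is applied to the correct family $g_\xi-g_\eta$.
\item The $w$-independence of the transverse density comes from $d$-closedness of $\Phi^*T$ for the bi-Lipschitz $\Phi$. The Jacobian is $\det_{\R}D_\xi h_\xi$ rather than $|\partial_\xi h_\xi|^2$, since $h_\xi(w)$ is merely Lipschitz in $\xi$.
\end{itemize}
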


\begin{corollary}\label{cor:fullmass}
Under the hypotheses of Theorem~\ref{thm:main}, the positive closed
current $T=\ddc u$ is laminar on $\Omega$ in the sense of
Definition~\ref{def:laminar}.
\end{corollary}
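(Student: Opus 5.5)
The plan is to derive the corollary from Theorem~\ref{thm:main}(iii) by decomposing $T$ into its restrictions to $\mathcal R$ and to $\mathcal Z$. I take Definition~\ref{def:laminar} in the usual Bedford--Lyubich--Smillie/Dujardin sense: $T$ is laminar if there is a countable family of flow boxes $W_j$ with disjoint open subsets $A_j\subset W_j$ and uniformly laminar currents $T_j\le T|_{A_j}$ (each carried by disjoint disks with a transverse measure) such that $T=\sum_j T_j$. If the definition is instead phrased as an increasing limit of uniformly laminar currents, the same construction gives the partial sums.

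\textbf{Step 1: the zero locus carries no mass.} Since $u\in C^2$, the current $T=\ddc u$ has continuous coefficients and is therefore absolutely continuous with respect to Lebesgue measure. Its trace measure is $\sigma_T=(u_{z\bar z}+u_{w\bar w})\,d\lambda$, up to a constant factor. On $\mathcal Z$ the Levi form vanishes identically, so the density is $0$ there, and $\|T\|(\mathcal Z)=0$. Hence $T=\mathbf 1_{\mathcal R}T$ as currents; in particular $\mathbf 1_{\mathcal R}T$ is closed.

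\textbf{Step 2: exhaust $\mathcal R$ by flow boxes.} By Theorem~\ref{thm:main}(i), every point of $\mathcal R$ has a flow box neighborhood $W_p$. Since $\mathcal R$ is open, hence Lindelöf, countably many $W_j$ cover it. Set $A_j=W_j\setminus\bigcup_{i<j}\overline{W_i}$, which gives disjoint open sets. The leftover set $\mathcal R\setminus\bigcup_j A_j$ lies in $\bigcup_i\partial W_i$. I can shrink the boxes, for instance by choosing the radii $s,\rho$ generically so that the boundaries are Lebesgue-null (the images under bi-Lipschitz $\Phi$ of null sets are null). Then this leftover set has Lebesgue measure zero and, by absolute continuity, zero $\|T\|$-mass.

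\textbf{Step 3: restrict the laminar structure.} Restricting \eqref{eq:currentintro} to the open set $A_j$ gives
\[
T|_{A_j}=\int_{\Delta_s}[\Gamma_\xi\cap A_j]\,2a(\xi,0)\,dA(\xi).
\]
Each $\Gamma_\xi\cap A_j$ is an open subset of a disk, that is, a countable union of disks, so $T|_{A_j}$ is laminar in the sense required. If the definition insists on uniformly laminar pieces with full disks, I would subdivide further: cover $A_j$ by countably many small sub-flow boxes (the product of a transverse disk and a smaller plaque disk) contained in $A_j$, made disjoint up to null boundaries as in Step 2. Summing over $j$ then gives $T=\sum_j T|_{A_j}$, using Steps 1 and 2.

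The main obstacle is the bookkeeping needed to match the precise form of Definition~\ref{def:laminar}, namely disjointness of the disk families and full mass. The genuine analytic input is only Step 1, the observation that $\mathcal Z$ is $\|T\|$-null because of $C^2$ regularity. This observation is what distinguishes the result from merely "uniformly laminar on $\mathcal R$".
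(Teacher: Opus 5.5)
Your proposal is correct and rests on the same two inputs as the paper: uniform laminarity of $T|_{\mathcal R}$ from Theorem~\ref{thm:main}(iii), and $\|T\|(\mathcal Z)=0$ because the continuous trace density, proportional to $\operatorname{tr}H_u$, vanishes on $\mathcal Z$. Under the paper's Definition~\ref{def:laminar} your Steps~2--3 are unnecessary bookkeeping, since uniform laminarity is local and survives restriction to open sets: the paper simply takes $U_\varepsilon=U_0\cap\mathcal R$ and $S_\varepsilon=T|_{U_\varepsilon}$ (Lemma~\ref{lem:fullmasscriterion}), which leaves zero trace-mass defect for every $\varepsilon$.
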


The laminarity conclusion cannot be strengthened to uniform
laminarity in general, as the following proposition shows.

\begin{proposition}
\label{prop:nonuniform}
There exists a smooth maximal plurisubharmonic function $u$
on a bidisk $\Omega\subset\C^2$ such that $\ddc u$ is not
uniformly laminar.
\end{proposition}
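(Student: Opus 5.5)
We take the simplest rank-one example. Let $f(z,w)=zw$ on the bidisk $\Omega=\Delta_1\times\Delta_1$ and set $u=|f|^2$. Then $u$ is smooth and plurisubharmonic, and
\[
 \ddc u=i\,\partial f\wedge\overline{\partial f}.
\]
This is a $(1,1)$-form of rank at most one at every point, so $(\ddc u)^2=0$ and $u$ is maximal. By the coarea formula for holomorphic maps,
\[
 T=\ddc u=\int_{\C}[\{f=c\}]\,dA(c).
\]
The positive-rank locus is $\mathcal R=\{df\neq0\}=\Omega\setminus\{0\}$, and its leaves are the smooth hyperbolas $\{zw=c\}$ with $c\neq0$, together with the punctured axes. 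By Theorem~\ref{thm:main} the current is uniformly laminar on $\mathcal R$, so the only point to analyze is the origin.

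We use the standard definition: $T$ is uniformly laminar near $0$ if there is a ball $B$ around $0$ with
\[
 T|_B=\int[D_\alpha]\,d\mu(\alpha),
\]
where the $D_\alpha$ are pairwise disjoint holomorphic disks, each properly embedded (closed) in $B$. Suppose such a representation exists; we derive a contradiction.

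\emph{Step 1: the disks lie in level sets.} Each $D_\alpha$ with $\mu$-positive weight lies in $\supp T$. By the uniqueness statement of Theorem~\ref{thm:main}(i), or directly because $u$ is harmonic along $D_\alpha$ only if $f|_{D_\alpha}$ is constant, $\mu$-a.e. $D_\alpha$ is contained in a single fiber $\{f=c\}$. The fiber $\{f=0\}$ carries no mass, so we may take $c\neq0$. Making this step rigorous requires a short argument: from the representation one must show that $T\wedge[D_\alpha]$-type slicing forces tangency to $\ker T$ for a.e.\ $\alpha$. This is the step I expect to need the most care; I would first try deducing it from the local flow-box description in Theorem~\ref{thm:main}(iii) on $B\setminus\{0\}$, together with disjointness of the $D_\alpha$.

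\emph{Step 2: topology of the fibers in $B$.} Take $B=\{|z|^2+|w|^2<r^2\}$ and $0<|c|<r^2/2$. Then $A_c=\{zw=c\}\cap B$ is a connected annulus; in the parametrization $z=t$, $w=c/t$ it is $\{t:|t|^2+|c|^2/|t|^2<r^2\}$, which is a round annulus in $t$. A disk $D_\alpha\subset A_c$ that is properly embedded in $B$ is open in $A_c$ (same dimension) and closed in $A_c$. By connectedness it equals $A_c$, which is impossible because $A_c$ is not simply connected.

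When $|c|\ge r^2/2$ the fiber misses the smaller ball $B'=\{|z|^2+|w|^2<|c|\}$ concentric with $B$ (note $|z|^2+|w|^2\ge2|zw|$). Hence $\mu$-a.e. disk avoids every fixed ball $B_{\varepsilon}$ around $0$ with $\varepsilon^2<r^2/2$.

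\emph{Step 3: mass comparison.} The trace measure of $T$ is comparable to $(|z|^2+|w|^2)\,dV$. This is positive on $B_\varepsilon\setminus\{0\}$, so $\|T\|(B_\varepsilon)>0$. On the other hand, Step 2 gives $\|T|_B\|(B_\varepsilon)=\int\|[D_\alpha]\|(B_\varepsilon)\,d\mu=0$. This contradiction shows $T$ is not uniformly laminar near $0$. It remains laminar there, consistent with Corollary~\ref{cor:fullmass}, since subdisks of the hyperbolas exhaust the mass.
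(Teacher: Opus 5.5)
There is one genuine gap: Step~1, which you leave as a plan. It carries the real content of the argument, since the paper's proof needs exactly the same fact. Neither route you suggest closes it on its own. The uniqueness in Theorem~\ref{thm:main}(i) only concerns curves on which $u$ is already known to be harmonic. The flow-box formula on $B\setminus\{0\}$ does not, by itself, constrain a second family of disjoint disks representing the same current.

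The missing idea is the positivity argument the paper uses. Since $T$ has smooth coefficients and rank at most one, $T\wedge T=0$ as a form. So for every test function $\chi\ge0$ supported in $B$,
\[
 0=\int_B\chi\,T\wedge T=\int\Bigl(\int_{D_\alpha}\chi\,T\Bigr)\,d\mu(\alpha),
\]
and each inner integral is nonnegative, because a positive $(1,1)$-form restricts to a nonnegative form on a complex curve. Letting $\chi$ run through a countable dense family gives $T|_{D_\alpha}=0$ for $\mu$-a.e.\ $\alpha$, i.e.\ $u$ is harmonic on $D_\alpha$. Then $\Delta|f\circ\phi_\alpha|^2=4|(f\circ\phi_\alpha)'|^2$ forces $f$ to be constant on $D_\alpha$. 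With this inserted, Steps~2--3 complete the proof.

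Two small remarks. With $\ddc=i\partial\bar\partial$, the coarea formula carries a factor $2$, as in the paper. Also, the case $|c|\ge r^2/2$ is vacuous: since $|z|^2+|w|^2\ge2|zw|$, those fibers miss $B$ entirely. So Step~2 already gives $\mu=0$, hence $T|_B=0$, without the mass comparison.

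Granting Step~1, your contradiction is genuinely different from the paper's. The paper upgrades a.e.\ tangency to tangency of every plaque. It does this using continuity of the graphs in the transverse parameter, Cauchy estimates on a common larger disk, and density of parameters in $\supp\nu$. This yields a continuous plaque line field equal to $\ker T$ off the origin, and the paper then notes that $\ker T$ has different limits along the two axes.

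Your annulus argument needs only a.e.\ tangency plus the topology of the fibers $\{zw=c\}$ inside a small ball. It therefore avoids the continuity-and-density upgrade entirely. The cost is that it depends on the explicit level-set structure of $zw$, rather than on a pointwise obstruction at the rank-zero point.

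To connect your argument to Definition~\ref{def:uniform}, which gives a representation on a flow box $W$ rather than on a ball, add that restricting to a ball $B\subset W$ centered at $0$ is harmless. Each $\Gamma_a\cap B$ is closed in $B$. Its components are simply connected by the maximum principle for the subharmonic function $|z|^2+|w|^2$ on the disk $\Gamma_a$.
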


The basic idea of our proof comes from the work of Berndtsson,
Cordero-Erausquin, Klartag, and Rubinstein \cite{BCCKR} on complex
interpolation of convex functions, Legendre duality, and holomorphic
foliations. An important insight of their work is that Legendre
duality connects the complex Monge--Amp\`ere equation with the
construction of holomorphic leaves. Whereas their approach starts
from globally convex boundary data and studies the associated
interpolation potential, we work directly with a prescribed local
$C^2$ maximal plurisubharmonic potential. We first choose local
coordinates in which the potential is strictly convex in the real
transverse variables, and then introduce the partial Legendre dual
\[
 v(p,w)=\sup_z\{2\operatorname{Re}(pz)-u(z,w)\},
\]
where the supremum is taken over an appropriate local neighborhood.
The homogeneous complex Monge--Amp\`ere equation ensures that the
dual function remains plurisubharmonic. Thus the Fenchel inequality
\[
 u(z,w)+v(p,w)-2\operatorname{Re}(pz)\ge0
\]
provides the key positivity structure for constructing holomorphic
harmonic leaves.

More precisely, we use a contraction fixed-point equation in a
Hardy space to construct a pair of holomorphic functions satisfying
a boundary gradient relation. The Fenchel inequality and the
maximum principle for subharmonic functions then propagate equality
from the boundary to the whole disk, proving that the original
potential is harmonic along the resulting holomorphic curves.
Further quantitative estimates yield locally bi-Lipschitz flow
boxes and uniqueness of the leaves, while closedness of the Levi
form determines the transverse measures and hence the laminar
representation of the Levi current.

\section{Preliminaries}\label{sec:definitions}

We set
\[
 \omega_0=\frac i2\bigl(dz\wedge d\bar z+dw\wedge d\bar w\bigr),
 \qquad \D=\Delta_1,\qquad \T=\partial\D,
\]
and write $dm(t)=dt/(2\pi)$ for normalized arc-length measure
on $\T$. The notation $dA$ denotes Lebesgue area measure on $\C$.

We recall the notions of a foliation by holomorphic curves, an
embedded lamination, and the corresponding notions for positive
closed currents. Our terminology follows \cite[Definitions~3.9
and~3.14]{DG}.

\begin{definition}\label{def:foliation}
A \emph{foliation by holomorphic curves} on an open set
$V\subset\C^2$ is an atlas of compatible flow-box
homeomorphisms $\Phi:\Delta\times\Delta\to W\subset V$ such
that $w\mapsto\Phi(\xi,w)$ is a holomorphic embedding for each
fixed transverse parameter $\xi$; transition maps preserve plaques.
We call such a flow box \emph{locally bi-Lipschitz} if it and its
inverse are locally Lipschitz. A \emph{$C^1$ foliation} instead
has $C^1$ flow-box diffeomorphisms, as in the classical Frobenius
setting.

An \emph{embedded lamination} is the analogous compatible family
of holomorphic plaques on a relatively closed set $X\subset V$;
locally the transverse disk is replaced by a relatively closed
$K\subset\Delta$ and the flow box identifies $\Delta\times K$
with $X\cap W$. In particular, $X$ need not contain an open set.
\end{definition}

\begin{definition}\label{def:foliatedcycle}
A \emph{foliated cycle} subordinate to a foliation is a positive
closed $(1,1)$-current given in each foliation flow box by
integrating over its plaques against a locally finite positive
transverse measure invariant under holonomy.
\end{definition}

\begin{definition}\label{def:uniform}
A positive closed $(1,1)$-current $S$ on $V$ is
\emph{uniformly laminar} if it is subordinate to an embedded
lamination and, near each point of its support, there are pairwise
disjoint holomorphic graphs over a common larger disk and a positive
transverse measure $\nu$ such that on a smaller flow box $W$,
\begin{equation}\label{eq:defuniform}
 S|_W=\int_K[\Gamma_a\cap W] \,d\nu(a).
\end{equation}
The local transverse measures are compatible under holonomy.
Restricting the parameter set to $\supp\nu$ does not change the
current, and the corresponding plaques cover $\supp S$ locally.
The common larger disk is understood locally, after shrinking the
flow box when necessary. This is the support-point formulation of
\cite[Definition~3.9]{DG}.
\end{definition}

\begin{definition}\label{def:laminar}
A positive closed $(1,1)$-current $S$ on $V$ is
\emph{laminar} if, for every relatively compact open set
$U\Subset V$ and every $\varepsilon>0$, there exist an open
$U_\varepsilon\subset U$ and a locally uniformly laminar positive
closed current $S_\varepsilon$ on $U_\varepsilon$ whose extension
by zero to $U$ defines a positive current
$\widetilde S_\varepsilon$ such that
\begin{equation}\label{eq:deflaminar}
 0\le\widetilde S_\varepsilon\le S|_U,
 \qquad
 \int_U(S|_U-\widetilde S_\varepsilon)\wedge\omega_0
 <\varepsilon.
\end{equation}
Here $S_\varepsilon$ is positive and closed on its own open domain
$U_\varepsilon$; only its zero extension as a positive current,
not closedness of that extension across $\partial U_\varepsilon$,
is required. This is the local trace-mass exhaustion form of
\cite[Definition~3.14]{DG}, used to accommodate currents of
infinite total mass on noncompact domains.
\end{definition}

For currents of finite trace mass, the local trace-mass
formulation is equivalent to the usual mass-exhaustion formulation
of \cite{DG}. Uniform laminarity is expressed by a common embedded
lamination, while laminarity is measured by local approximation in
trace mass.

\section{Levi-null geometry and transverse convexification}
\label{sec:normalization}
\subsection{The null direction at a positive-rank point}

Let $\Omega\subset\C^2$ have coordinates $(z,w)$ and let
$u\in C^2(\Omega)$ be real valued. Put
\begin{equation}\label{eq:H}
 H_u=\begin{pmatrix}a&b\\\bar b&c\end{pmatrix},\qquad
 a=u_{z\bar z},\quad b=u_{z\bar w},\quad c=u_{w\bar w}.
\end{equation}
The conditions $u\in\PSH(\Omega)$ and $(\ddc u)^2=0$ are equivalent
to $H_u\ge0$ and $ac-|b|^2=0$, respectively. If $a>0$, then $H_u$ has
rank one. Setting $q=b/a$, for a holomorphic graph $w\mapsto(h(w),w)$
we have
\begin{equation}\label{eq:graphLevi}
 \partial_w\partial_{\bar w}\bigl(u(h(w),w)\bigr)
 =a\left|h'(w)+\bar q\right|^2,
\end{equation}
where the coefficients on the right are evaluated at $(h(w),w)$.
Consequently, such a graph is harmonic for $u$ precisely when
\begin{equation}\label{eq:leafode}
 h'(w)=-\overline{q(h(w),w)}.
\end{equation}
For a holomorphic graph, equation~\eqref{eq:graphLevi} therefore
identifies the Levi-null condition with the first-order equation
\eqref{eq:leafode}. The coefficient $q=b/a$ is only continuous; the
construction of holomorphic solutions cannot be inferred from a
classical differentiable Frobenius theorem.

\subsection{A normal form with small transverse derivatives}

\begin{lemma}\label{lem:norm}
At a point $p_0\in\mathcal R$, a holomorphic linear
change of coordinates and subtraction of a real-valued pluriharmonic
polynomial of degree at most two, followed by an adjustment of the
constant, reduce the local potential to
\begin{equation}\label{eq:normal}
 u(z,w)=|z|^2+o(|z|^2+|w|^2).
\end{equation}
For $E=u_z-\bar z$ we then have $E\in C^1$ and
\begin{equation}\label{eq:Ezero}
 E(0)=0,\qquad DE(0)=0.
\end{equation}
These operations preserve harmonicity of the restriction of $u$ to
any holomorphic curve.
\end{lemma}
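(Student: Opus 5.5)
The plan is to normalize the second-order Taylor polynomial of $u$ at $p_0$ step by step, and then read off the properties of $E$ from that normal form.

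First I translate so that $p_0=0$. By Taylor's theorem for $u\in C^2$, I write
\[
u(z,w)=u(0)+2\operatorname{Re}\bigl(\alpha z+\beta w\bigr)+2\operatorname{Re}Q(z,w)+L(z,w)+o(|z|^2+|w|^2).
\]
Here $\alpha=u_z(0)$ and $\beta=u_w(0)$. The term $Q$ is the holomorphic quadratic part, $Q=\tfrac12(u_{zz}z^2+2u_{zw}zw+u_{ww}w^2)$ evaluated at $0$. The term $L$ is the Hermitian (Levi) part $L(z,w)=a|z|^2+2\operatorname{Re}(b z\bar w)+c|w|^2$ at $0$.

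The function $P=u(0)+2\operatorname{Re}(\alpha z+\beta w)+2\operatorname{Re}Q$ is a real-valued pluriharmonic polynomial of degree at most two. Subtracting $P$ leaves $u-P=L+o(|z|^2+|w|^2)$. The constant adjustment is already absorbed into $P$.

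Next I use the Levi form. Since $p_0\in\mathcal R$, the matrix $H_u(0)$ is nonzero and $H_u(0)\ge0$. The equation $(\ddc u)^2=0$ gives $\det H_u(0)=0$, so $H_u(0)$ has rank one. Hence $L(z,w)=|\lambda z+\mu w|^2$ for some $(\lambda,\mu)\ne(0,0)$.

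I then choose a holomorphic linear change of coordinates with new first coordinate $z'=\lambda z+\mu w$, and complete it to a basis by a second linear form $w'$. In the coordinates $(z',w')$ the Levi part becomes $L=|z'|^2$. A linear change keeps the remainder of order $o(|z'|^2+|w'|^2)$, and it maps pluriharmonic polynomials of degree at most two to the same class. This gives \eqref{eq:normal}.

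For $E=u_z-\bar z$: since $u\in C^2$, the function $u_z$ is $C^1$, so $E\in C^1$. In the new coordinates $u=|z|^2+R$ with $R\in C^2$ and $R=o(|z|^2+|w|^2)$. A $C^2$ function with this vanishing order has $R(0)=0$, $DR(0)=0$ and $D^2R(0)=0$, because its second-order Taylor polynomial must be zero. Therefore $E=R_z$ satisfies $E(0)=0$ and $DE(0)=0$, which is \eqref{eq:Ezero}.

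Finally, harmonicity along curves is preserved. If $\gamma$ is a holomorphic curve and $P$ is pluriharmonic, then $P\circ\gamma$ is harmonic, so $u\circ\gamma$ is harmonic if and only if $(u-P)\circ\gamma$ is. A holomorphic linear coordinate change maps holomorphic curves to holomorphic curves, so it does not affect the property either.

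The only step needing care is the rank-one factorization of $L$. This follows from the two facts already used: $H_u(0)\ge0$ with $\det H_u(0)=0$, and $H_u(0)\neq0$. Everything else is routine.
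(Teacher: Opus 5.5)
Your proposal is correct and follows essentially the same route as the paper. Both arguments diagonalize the rank-one Levi form by a holomorphic linear change of coordinates and subtract the pluriharmonic part of the second-order Taylor polynomial (constant, real linear term, real part of the holomorphic quadratic). Both then read off $E(0)=0$ and $DE(0)=0$ from the $C^2$ regularity, and doing the subtraction before the coordinate change, as you do, makes no difference.
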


\begin{proof}
The nonzero Hermitian matrix $H_u(p_0)$ is positive semidefinite of
rank one. A complex linear coordinate change puts it into the form
$\operatorname{diag}(1,0)$. The second-order Taylor polynomial of a
real-valued function is the sum of its Hermitian quadratic part, the
real part of a holomorphic quadratic polynomial, a real linear term,
and a constant. Subtracting the latter three terms leaves precisely
$|z|^2$ as the quadratic part, proving \eqref{eq:normal}.
The $C^2$ regularity gives \eqref{eq:Ezero}. The restriction of the
real part of a holomorphic function to a holomorphic curve is harmonic,
so the subtracted terms do not change the leaves under consideration.
We continue to denote the normalized potential by $u$.
\end{proof}

Fix, once and for all,
\begin{equation}\label{eq:lambda}
 \lambda=\frac1{100}.
\end{equation}

\begin{lemma}
\label{lem:extension}
There are $R>0$ and $\widetilde E\in C_c^1(\C^2,\C)$ such that
\begin{equation}\label{eq:Eextend}
 \widetilde E=E\quad\text{on }\Delta_R\times\Delta_R,
 \qquad\widetilde E(0)=0,
 \qquad\Lip(\widetilde E)\le\lambda.
\end{equation}
In particular, setting $\sigma=u_{zz}$ on this bidisk, we have
\begin{equation}\label{eq:convexcondition}
 a\ge1-\lambda,\qquad |\sigma|\le\lambda,
 \qquad a-|\sigma|\ge1-2\lambda>0.
\end{equation}
\end{lemma}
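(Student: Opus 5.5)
We prove Lemma~\ref{lem:extension} by a standard cutoff argument applied to the $C^1$ map $E$ from Lemma~\ref{lem:norm}, using $DE(0)=0$ to make the Lipschitz constant small on a small bidisk.

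First, since $E\in C^1$ near the origin with $DE(0)=0$ by \eqref{eq:Ezero}, continuity of $DE$ gives $R_0>0$ such that the real operator norm satisfies $\|DE\|\le\lambda/10$ on $\Delta_{2R_0}\times\Delta_{2R_0}$. Because $E(0)=0$, the mean value inequality on this convex set gives $|E(x)|\le(\lambda/10)|x|$ there.

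Next, fix $\chi\in C_c^\infty(\C^2)$ with $0\le\chi\le1$, $\chi=1$ on $\Delta_1\times\Delta_1$, and $\supp\chi\subset\Delta_2\times\Delta_2$; let $M=\sup|D\chi|$. For $R\le R_0$ set $\chi_R(x)=\chi(x/R)$ and $\widetilde E=\chi_R E$, extended by zero. Then $\widetilde E\in C_c^1$, $\widetilde E=E$ on $\Delta_R\times\Delta_R$, and $\widetilde E(0)=0$. On the support we have
\[
 \|D\widetilde E\|\le \|DE\|+|E|\,|D\chi_R|\le \frac{\lambda}{10}+\frac{\lambda}{10}\,|x|\,\frac{M}{R}\le\frac{\lambda}{10}\bigl(1+2\sqrt2\,M\bigr),
\]
since $|x|\le2\sqrt2R$ there. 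This bound is independent of $R$.

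To finish the Lipschitz bound, choose the threshold $\lambda/(10(1+2\sqrt2M))$ in place of $\lambda/10$ when selecting $R_0$. Then $\|D\widetilde E\|\le\lambda$ everywhere, and since $\C^2$ is convex, $\Lip(\widetilde E)\le\lambda$. This step, where the product rule must not spoil smallness, is the only genuine point: the term $|E|\,|D\chi_R|$ carries a factor $1/R$. It is controlled because $|E|=O(|x|)$ with a small constant, which follows from $E(0)=0$ and the small derivative.

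Finally, for \eqref{eq:convexcondition}, on $\Delta_R\times\Delta_R$ we have $E=u_z-\bar z$. Hence $\partial_{\bar z}E=u_{z\bar z}-1=a-1$ and $\partial_zE=u_{zz}=\sigma$. Each of these partials is bounded by $\|DE\|\le\lambda$, so $a\ge1-\lambda$ and $|\sigma|\le\lambda$. Combining them gives $a-|\sigma|\ge1-2\lambda>0$.
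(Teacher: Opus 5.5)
Your proof is correct and follows essentially the same route as the paper. You cut off $E$ by $\chi(x/R)$, control the product-rule term $|E|\,|D\chi_R|\lesssim M/R\cdot|x|$ via $|E(x)|\le\delta|x|$ (from $E(0)=0$ and small $DE$ near the origin), and then bound the Wirtinger derivatives $E_z=\sigma$, $E_{\bar z}=a-1$ by the norm of the real differential; the only differences from the paper (a bidisk rather than a ball cutoff, explicit constants) are cosmetic.
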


\begin{proof}
Choose a fixed smooth cutoff equal to $1$ on the real ball $B(0,2)$
and supported in $B(0,3)$. Because $DE(0)=0$, we may choose $R>0$
such that $\sup_{B(0,4R)}|DE|\le\delta$, where $\delta$ will be
chosen depending only on the cutoff. Since $E(0)=0$, we have
$|E(x)|\le\delta|x|$. Multiply $E(x)$ by the cutoff
$\chi(x/R)$ and extend the product by zero outside the ball.
This gives a $C_c^1$ extension. In the product rule, the two
derivative terms are bounded, respectively, by $\delta$ and
$R^{-1}\cdot3\delta R\norm{D\chi}_\infty$. Choosing $\delta$
sufficiently small gives \eqref{eq:Eextend}.

Since $\Delta_R^2\subset B(0,2R)$, the extension agrees with $E$
there. The identities $E_z=\sigma$ and $E_{\bar z}=a-1$,
together with the fact that the modulus of each Wirtinger derivative
is bounded by the norm of the real differential, yield
\eqref{eq:convexcondition}. For every $v\in\C$ the transverse real
Hessian satisfies
\[
 D_z^2u[v,v]=2a|v|^2+2\operatorname{Re}(\sigma v^2)
 \ge2(a-|\sigma|)|v|^2.
\]
Thus $z\mapsto u(z,w)$ is strictly convex on $\Delta_R$ for each
fixed $w$ in the bidisk.
\end{proof}

\section{A plurisubharmonic partial Legendre dual}
\label{sec:duality}

The role of the partial Legendre transform is not merely to
reparametrize the null direction. In the $z$ variable the real
Hessian of $u$ is strictly positive by \eqref{eq:convexcondition}, so
one can solve the real-gradient equation $p=u_z$ uniquely. The
homogeneous Monge--Amp\`ere equation then has the decisive effect
that the resulting dual weight is itself plurisubharmonic, allowing
a maximum-principle argument on holomorphic disks. We dualize only in
the transverse $z$ variable, not in the null $w$ direction.

The dual variable is denoted by $p\in\C$ and should not be confused
with the point $p_0$. We use the real bilinear pairing
$2\operatorname{Re}(pz)$. With this convention the stationary equation
is $p=u_z$, not $p=2u_z$.

\begin{lemma}
\label{lem:inverse}
For every $(p,w)\in\Delta_{R/4}\times\Delta_{R/4}$, the equation
\begin{equation}\label{eq:stationary}
 p=u_z(z,w)=\bar z+E(z,w)
\end{equation}
has a unique solution $z=Z(p,w)$ in $\Delta_{R/2}$, and $Z\in C^1$.
\end{lemma}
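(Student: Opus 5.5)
We recast \eqref{eq:stationary} as a fixed-point problem for a contraction in $z$. With $p,w$ fixed, the equation $p=\bar z+E(z,w)$ is equivalent to
\[
 z=\overline{p-\widetilde E(z,w)}=:F_{p,w}(z),
\]
provided the solution lies in $\Delta_R$, where $\widetilde E=E$. We first solve this equation globally in $z\in\C$ using the extension $\widetilde E$ from Lemma~\ref{lem:extension}. Since $\Lip(\widetilde E)\le\lambda=1/100$, the map $F_{p,w}:\C\to\C$ is a contraction with constant $\lambda$. The Banach fixed-point theorem then gives a unique fixed point $z=Z(p,w)\in\C$.

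Next we localize this fixed point. Since $\widetilde E(0)=0$, we have $|\widetilde E(z,w)|\le\lambda(|z|+|w|)$, and therefore
\[
 |Z|\le|p|+\lambda(|Z|+|w|).
\]
For $|p|,|w|<R/4$ this yields $|Z|\le(1+\lambda)R/(4(1-\lambda))<R/2$. So $Z\in\Delta_{R/2}$, and in particular $(Z,w)$ lies in $\Delta_R\times\Delta_R$, where $\widetilde E=E$. Hence $Z$ solves \eqref{eq:stationary}. Uniqueness in $\Delta_{R/2}$ follows because any solution $z\in\Delta_{R/2}$ of \eqref{eq:stationary} is a fixed point of $F_{p,w}$ (again because $\widetilde E=E$ there), and the fixed point is unique.

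For the regularity, we apply the implicit function theorem to $G(z,p,w)=\bar z+E(z,w)-p$, which is $C^1$ since $E\in C^1$. Its real differential in $z$ is $v\mapsto\bar v+E_zv+E_{\bar z}\bar v$, that is,
\[
 v\mapsto \sigma v+a\bar v
\]
by the identities $E_z=\sigma$ and $E_{\bar z}=a-1$. This map is invertible because $|\sigma v+a\bar v|\ge(a-|\sigma|)|v|\ge(1-2\lambda)|v|$ by \eqref{eq:convexcondition}. The implicit function theorem therefore gives a local $C^1$ solution branch near each point $(Z(p_0,w_0),p_0,w_0)$. By uniqueness this branch agrees with $Z$, which also requires continuity of $Z$; that continuity follows from the contraction bound
\[
 |Z(p,w)-Z(p',w')|\le(1-\lambda)^{-1}\bigl(|p-p'|+\lambda|w-w'|\bigr).
\]
So $Z\in C^1$.

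The only delicate point is the localization: the a priori bound must keep $Z$ strictly inside $\Delta_{R/2}$, so that the extension $\widetilde E$ coincides with $E$ at the solution. This is exactly where the radius $R/4$ and the choice $\lambda=1/100$ are used.
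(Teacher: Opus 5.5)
Your proof is correct and follows the paper's route: a $\lambda$-contraction for $z=\bar p-\overline{E(z,w)}$, an a priori bound placing the fixed point in $\Delta_{R/2}$, and then $C^1$ regularity from the implicit function theorem, using $a>|\sigma|$ for invertibility and uniqueness to patch the local branches. The only difference is cosmetic: you run the contraction on all of $\C$ with $\widetilde E$ and then localize, whereas the paper checks directly that the map sends $\overline{\Delta_{R/2}}$ into itself.
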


\begin{proof}
Write the equation as
\[
 z=\bar p-\overline{E(z,w)}.
\]
On $|z|\le R/2$, the right-hand side has Lipschitz constant at
most $\lambda$ as a function of $z$, and
\[
 |\bar p-\overline{E(z,w)}|
 \le\frac R4+\lambda\left(\frac R2+\frac R4\right)
 <\frac R2.
\]
The Banach fixed-point theorem therefore gives a unique solution.
Equivalently, this fixed-point equation inverts the real map
$F_w(z):=z+\overline{E(z,w)}=u_{\bar z}(z,w)$ at the value
$\bar p$. Its real Jacobian determinant is
$\det_{\R}D_zF_w=a^2-|\sigma|^2>0$.
The parameter-dependent real inverse function theorem therefore
gives local inverses of class $C^1$; by uniqueness, they patch
together to form the asserted inverse $Z(p,w)$ on the common domain.
\end{proof}

Define the partial real Legendre transform by
\begin{equation}\label{eq:legendredef}
 v(p,w)=pZ(p,w)+\bar p\,\overline{Z(p,w)}-u(Z(p,w),w).
\end{equation}

\begin{lemma}\label{lem:fenchel}
The function $v$ belongs to $C^2(\Delta_{R/4}^2)$ and satisfies
\begin{equation}\label{eq:envelopefirst}
 v_p=Z,\qquad v_w=-u_w(Z(p,w),w).
\end{equation}
For $|z|<R$ and $|p|,|w|<R/4$, one has
\begin{equation}\label{eq:fenchel}
 u(z,w)+v(p,w)-2\operatorname{Re}(pz)\ge0,
\end{equation}
with equality if and only if $z=Z(p,w)$, or equivalently
$p=u_z(z,w)$.
\end{lemma}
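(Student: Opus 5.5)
The proof has three parts: the envelope identities \eqref{eq:envelopefirst} by differentiating the definition, the $C^2$ regularity from those identities, and the Fenchel inequality \eqref{eq:fenchel} from strict convexity of $z\mapsto u(z,w)$ on $\Delta_R$.

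\emph{Step 1: envelope identities.} By Lemma~\ref{lem:inverse}, $Z\in C^1$ and $u\in C^2$, so $v\in C^1$ and we may differentiate \eqref{eq:legendredef} by the chain rule. Compute $v_p$ first:
\[
 v_p=Z+pZ_p+\bar p\,\overline{Z}_p-u_z(Z,w)Z_p-u_{\bar z}(Z,w)\overline{Z}_p .
\]
The stationary equation \eqref{eq:stationary} gives $u_z(Z,w)=p$. Since $u$ is real, $u_{\bar z}(Z,w)=\bar p$. All terms containing $Z_p$ and $\overline{Z}_p$ therefore cancel, leaving $v_p=Z$. The same cancellation applies to the $w$-derivative: every term with $Z_w$ or $\overline Z_w$ drops out, and only the explicit term $-u_w(Z,w)$ survives. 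This proves \eqref{eq:envelopefirst}.

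\emph{Step 2: $C^2$ regularity.} The right-hand sides of \eqref{eq:envelopefirst} are $Z$ and $-u_w(Z(p,w),w)$. Both are $C^1$, because $Z\in C^1$ and $u_w\in C^1$. Since $v$ is real, $v_{\bar p}=\overline{v_p}$ and $v_{\bar w}=\overline{v_w}$, so every first-order derivative of $v$ is $C^1$. Hence $v\in C^2(\Delta_{R/4}^2)$.

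\emph{Step 3: Fenchel inequality.} Fix $|p|,|w|<R/4$ and set
\[
 g(z)=u(z,w)-2\operatorname{Re}(pz),\qquad |z|<R .
\]
By the proof of Lemma~\ref{lem:extension}, $g$ has real Hessian bounded below by $2(1-2\lambda)|v|^2$, so $g$ is strictly convex on the convex disk $\Delta_R$. The point $z_0=Z(p,w)\in\Delta_{R/2}$ is a critical point of $g$, because $g_z(z_0)=u_z(z_0,w)-p=0$. A strictly convex function has at most one critical point, and that point is its unique strict global minimum. Thus $g(z)\ge g(z_0)$, with equality iff $z=z_0$. From \eqref{eq:legendredef}, $g(z_0)=-v(p,w)$, which is exactly \eqref{eq:fenchel} together with its equality case.

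The equality case is also characterized by $p=u_z(z,w)$. In one direction this is immediate. Conversely, suppose $p=u_z(z,w)$ for some $|z|<R$. Then $z$ is a critical point of $g$, and strict convexity forces $z=z_0$. Note that this uses only convexity on the whole of $\Delta_R$, not the uniqueness statement of Lemma~\ref{lem:inverse}, which covers only $\Delta_{R/2}$.

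The main point needing care is this equality characterization on all of $\Delta_R$ rather than on $\Delta_{R/2}$. It is handled by the strict convexity on the convex domain $\Delta_R$ noted in Lemma~\ref{lem:extension}. The remaining steps are routine chain-rule computations.
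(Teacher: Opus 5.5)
Your proposal is correct and follows essentially the same route as the paper. In both, differentiating \eqref{eq:legendredef} makes the terms involving derivatives of $Z$ cancel via $p=u_z(Z,w)$. The $C^1$ right-hand sides of \eqref{eq:envelopefirst} then give $v\in C^2$. Finally, strict convexity of $z\mapsto u(z,w)-2\operatorname{Re}(pz)$ on $\Delta_R$, with its critical point at $Z(p,w)$, yields \eqref{eq:fenchel} and the equality case.

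You also spell out the equivalence with $p=u_z(z,w)$ on all of $\Delta_R$ through uniqueness of critical points of a strictly convex function, rather than through Lemma~\ref{lem:inverse}, whose uniqueness covers only $\Delta_{R/2}$. The paper leaves this point implicit, so making it explicit is a useful clarification.
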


\begin{proof}
When \eqref{eq:legendredef} is differentiated once, all terms
containing $dZ$ cancel because $p=u_z(Z,w)$. This gives
\eqref{eq:envelopefirst} and its conjugate. The right-hand sides
are of class $C^1$, hence $v\in C^2$.

For fixed $w$, the function
$z\mapsto u(z,w)-2\operatorname{Re}(pz)$ is strictly convex on the
convex disk $\Delta_R$ and has vanishing first real differential
at $Z(p,w)\in\Delta_{R/2}$. It attains its unique minimum there.
This is precisely \eqref{eq:fenchel}, together with its equality
condition.
\end{proof}

\begin{lemma}\label{lem:dualle}
The function $v$ is plurisubharmonic and satisfies $(\ddc v)^2=0$.
More precisely, write
\[
 \begin{gathered}
 a=u_{z\bar z},\quad b=u_{z\bar w},\quad c=u_{w\bar w},
 \quad\sigma=u_{zz},\quad t=u_{zw},\\
 D=a^2-|\sigma|^2,\qquad A=at-\sigma\bar b.
 \end{gathered}
\]
All coefficients on the right are evaluated at $(Z(p,w),w)$.
The strict convexity of $u$ in the real transverse variable gives
$D>0$; the Monge--Amp\`ere identity $ac=|b|^2$ is responsible for
the rank-one factorization below:
\begin{equation}\label{eq:dualH}
 H_v=\frac1{aD}
 \begin{pmatrix}
 a^2&-a\bar A\\
 -aA&|A|^2
 \end{pmatrix}
 =\frac1{aD}
 \begin{pmatrix}a\\-A\end{pmatrix}
 \begin{pmatrix}a&-\bar A\end{pmatrix}\ge0.
\end{equation}
\end{lemma}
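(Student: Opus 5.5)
Our approach is to compute $H_v$ from the first-order identities \eqref{eq:envelopefirst} of Lemma~\ref{lem:fenchel}, obtaining the second derivatives of $Z$ by differentiating the stationary equation \eqref{eq:stationary} implicitly. Since $v_p=Z$ and $v_w=-u_w(Z,w)$, the entries of $H_v$ are
\[
 v_{p\bar p}=Z_{\bar p},\qquad v_{p\bar w}=Z_{\bar w},\qquad
 v_{w\bar w}=-\bigl(u_{w\bar z}Z_{\bar w}+u_{ww}\overline{Z}_{\bar w}\cdots\bigr),
\]
so we need the four first derivatives $Z_p,Z_{\bar p},Z_w,Z_{\bar w}$. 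Everything is evaluated at $(Z(p,w),w)$, and by Lemma~\ref{lem:inverse} $Z\in C^1$, so all manipulations are legitimate at $C^2$ regularity of $u$.

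\textbf{Step 1 (implicit differentiation).} Differentiate $p=u_z(Z,w)$ and its conjugate $\bar p=u_{\bar z}(Z,w)$ in $\bar p$, and separately in $w$ and $\bar w$. In $\bar p$ we get the linear system
\[
 0=\sigma Z_{\bar p}+a\,\overline{Z}_{\bar p},\qquad 1=a Z_{\bar p}+\bar\sigma\,\overline{Z}_{\bar p}.
\]
Its determinant is $a^2-|\sigma|^2=D>0$ by \eqref{eq:convexcondition}, so $Z_{\bar p}=a/D$; this gives the $(1,1)$ entry $a^2/(aD)$. Similarly, differentiating in $\bar w$ gives
\[
 0=\sigma Z_{\bar w}+a\overline{Z}_{\bar w}+b,\qquad 0=aZ_{\bar w}+\bar\sigma\overline{Z}_{\bar w}+\bar t,
\]
where $u_{\bar z\bar w}=\bar t$. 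Solving by Cramer's rule gives $Z_{\bar w}=(\sigma\bar b\ldots)$, which we compare with $-\bar A/D$ up to the stated normalization. Some care with conjugation conventions is needed: $-a\bar A/(aD)=-\bar A/D$ must match $v_{p\bar w}$. If the signs or conjugates come out swapped, we recheck which entry is $H_{12}$ in the convention \eqref{eq:H}.

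\textbf{Step 2 (the $w\bar w$ entry, the main obstacle).} By the chain rule,
\[
 v_{w\bar w}=-\bigl(u_{w\bar w}+u_{wz}Z_{\bar w}+u_{w\bar z}\overline{Z}_{\bar w}\bigr)
 =-\bigl(c+tZ_{\bar w}+\bar b\,\overline{Z}_{\bar w}\bigr).
\]
Substituting the Cramer expressions gives a rational expression with denominator $D$. The target value is $|A|^2/(aD)$. To reach it, multiply through by $aD$ and expand. The expansion has the shape
\[
 -acD+(\text{quadratic terms in } t,b,\sigma),
\]
and we use the Monge--Amp\`ere relation $ac=|b|^2$ to replace $acD$ by $|b|^2(a^2-|\sigma|^2)$. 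The remaining terms should then regroup as $|at-\sigma\bar b|^2=a^2|t|^2-2a\operatorname{Re}(\bar t\sigma\bar b)+|\sigma|^2|b|^2$. This bookkeeping is the only genuinely delicate point. It is also where the hypothesis $(\ddc u)^2=0$ enters: without it, an extra term $a(ac-|b|^2)\cdot(\text{something})$, with a negative sign, would survive, and $v$ would fail to be psh in general.

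\textbf{Step 3 (conclusion).} Once the three entries are identified, \eqref{eq:dualH} is an explicit factorization of $H_v$ as $(aD)^{-1}\xi\xi^*$ with $\xi=(a,-A)^T$. Since $a>0$ and $D>0$ by \eqref{eq:convexcondition}, $H_v\ge0$, so the $C^2$ function $v$ is plurisubharmonic. Moreover $\det H_v=0$ because $H_v$ has rank at most one, which is exactly $(\ddc v)^2=0$.
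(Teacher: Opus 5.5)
Your proposal is correct and follows the paper's proof: implicit differentiation of $p=u_z(Z,w)$ and $\bar p=u_{\bar z}(Z,w)$, inversion of the transverse $2\times2$ system (determinant $-D$), the envelope identities \eqref{eq:envelopefirst}, and then $ac=|b|^2$ to collapse $aD\,v_{w\bar w}$ to $|at-\sigma\bar b|^2$. Carrying it out, Cramer's rule gives exactly $Z_{\bar w}=(\bar\sigma b-a\bar t)/D=-\bar A/D$, so no conjugation or convention swap arises; also, the correct chain rule for $v_{w\bar w}$ is the one in your Step~2, not the garbled expression in your opening display.
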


\begin{proof}
The identities $p=u_z$ and $\bar p=u_{\bar z}$ give
\begin{align*}
 dp&=\sigma\,dz+a\,d\bar z+t\,dw+b\,d\bar w,\\
 d\bar p&=a\,dz+\bar\sigma\,d\bar z+\bar b\,dw+\bar t\,d\bar w.
\end{align*}
The inverse of the transverse coefficient matrix is
\[
 \begin{pmatrix}\sigma&a\\a&\bar\sigma\end{pmatrix}^{-1}
 =\frac1D\begin{pmatrix}-\bar\sigma&a\\a&-\sigma\end{pmatrix}.
\]
Consequently,
\begin{align}
 Z_{\bar p}&=\frac aD,\label{eq:inv1}\\
 Z_{\bar w}&=\frac{\bar\sigma b-a\bar t}{D},\label{eq:inv2}\\
 \overline Z_{\bar w}&=\frac{\sigma\bar t-ab}{D}.
 \label{eq:inv3}
\end{align}
Combining these formulas with \eqref{eq:envelopefirst} yields
\begin{align}
 v_{p\bar p}&=\frac aD,\label{eq:vpp}\\
 v_{p\bar w}&=\frac{\bar\sigma b-a\bar t}{D},\label{eq:vpw}\\
 v_{w\bar w}
 &=-c+\frac{a(|t|^2+|b|^2)
       -2\operatorname{Re}(\bar\sigma bt)}{D}.
 \label{eq:vww}
\end{align}
Here $D>0$ follows from $a>|\sigma|$ in
\eqref{eq:convexcondition}. Using the homogeneous Monge--Amp\`ere
equation $ac=|b|^2$ in \eqref{eq:vww} gives
\begin{align*}
 v_{w\bar w}
 &=\frac{a^2|t|^2+|\sigma|^2|b|^2
       -2a\operatorname{Re}(\bar\sigma bt)}{aD}\\
 &=\frac{|at-\sigma\bar b|^2}{aD}.
\end{align*}
Moreover, the numerator in \eqref{eq:vpw} is $-\bar A$.
This proves \eqref{eq:dualH}. The matrix is a positive multiple of
a positive semidefinite rank-one matrix. \end{proof}

\begin{remark}
The same inverse-function calculation, without assuming
$(\ddc u)^2=0$, gives the identity
\[
 \det H_v=-\frac{\det H_u}{a^2-|u_{zz}|^2}.
\]
Thus the homogeneous equation is essential for the dual to be
plurisubharmonic: for $u=|z|^2+|w|^2$ one obtains
$v=|p|^2-|w|^2$. The Hardy boundary equation is localized in
Lemma~\ref{lem:localization}.
\end{remark}

\section{Holomorphic disks: Hardy data and the Fenchel gap}
\label{sec:hardy}

To obtain an actual holomorphic integral curve, we seek holomorphic
functions $H$ and $K$ on a common disk whose boundary values obey
$K=u_z(H,w)$. Since the transverse gradient has the form
$u_z=\bar z+E(z,w)$, this is an analytic-disc boundary problem for
the nonholomorphic perturbation $E$. Orthogonal Fourier projection
turns it into a contraction on a Hardy space. An $H^2$ fixed point,
however, has only $L^2$ boundary values and need not remain in the
local domain of the Legendre transform. Rotational difference
quotients will improve the boundary data to $W^{1,2}$, giving the
uniform boundary control that the subsequent Fenchel argument needs.

\subsection{Fourier projections and boundary estimates}

Write a function in $L^2(\T)$ as
$f(e^{it})=\sum_{n\in\mathbb Z}f_ne^{int}$ and put
\[
 H^2_0=\{f\in L^2(\T):f_n=0\text{ for }n\le0\}.
\]
Let $P_-f=\sum_{n<0}f_ne^{int}$. It is an orthogonal projection
of norm one, while complex conjugation maps the negative-frequency
space isometrically onto $H^2_0$.

If $g\in H^2_0$, its holomorphic extension to $\D$ satisfies
\begin{equation}\label{eq:hardypoint}
 |g(\zeta)|\le
 \frac{|\zeta|}{\sqrt{1-|\zeta|^2}}\norm{g}_{L^2(\T)}.
\end{equation}
Indeed, apply Cauchy--Schwarz directly to
$\sum_{n\ge1}g_n\zeta^n$. Similarly,
\begin{equation}\label{eq:hardyder}
 |g'(\zeta)|
 \le\left(\frac{1+|\zeta|^2}{(1-|\zeta|^2)^3}\right)^{1/2}
 \norm{g}_{L^2(\T)}.
\end{equation}
If also $g\in W^{1,2}(\T)$, then
\begin{equation}\label{eq:hardyboundary}
 \norm{g}_{L^\infty(\T)}
 \le\sum_{n\ge1}|g_n|
 \le\frac\pi{\sqrt6}\norm{\partial_tg}_{L^2(\T)}.
\end{equation}
In particular, its Fourier series converges absolutely and uniformly
on the closed disk, yielding a holomorphic function continuous up to
the boundary.

\begin{lemma}
\label{lem:hardy}
Suppose $\widetilde E:\C^2\to\C$ satisfies the Lipschitz bound in
\eqref{eq:Eextend}. For any $\xi\in\C$ and $\rho>0$, there is a
unique $g_\xi\in H^2_0$ such that
\begin{equation}\label{eq:fixed}
 g_\xi=-\overline{P_-
       \widetilde E(\xi+g_\xi,\rho e^{it})}.
\end{equation}
Let $\hb_\xi=\xi+g_\xi$. There is a holomorphic function
$\kb_\xi$, continuous on $\overline\D$, satisfying
\begin{equation}\label{eq:boundaryEK}
 \kb_\xi(e^{it})
 =\overline{\hb_\xi(e^{it})}
   +\widetilde E(\hb_\xi(e^{it}),\rho e^{it}).
\end{equation}
Moreover, with
\begin{equation}\label{eq:Bdef}
 B=\frac\pi{\sqrt6}\frac\lambda{1-\lambda}<\frac1{50},
\end{equation}
we have
\begin{equation}\label{eq:gBound}
 \norm{\partial_tg_\xi}_2\le\frac{\lambda\rho}{1-\lambda},
 \qquad\norm{g_\xi}_{L^\infty(\overline\D)}\le B\rho,
\end{equation}
and the parameter estimate
\begin{equation}\label{eq:paramH2}
 \norm{g_\xi-g_\eta}_2
 \le\frac\lambda{1-\lambda}|\xi-\eta|.
\end{equation}
\end{lemma}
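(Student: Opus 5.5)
The plan is to apply the Banach fixed-point theorem on $H^2_0$ to the map
\[
 \mathcal T_\xi(g)=-\overline{P_-\widetilde E(\xi+g,\rho e^{it})}.
\]
Since $\widetilde E$ is globally Lipschitz with constant $\lambda$ and compactly supported, the function $\widetilde E(\xi+g,\rho e^{it})$ lies in $L^2(\T)$ whenever $g\in L^2$. The projection $P_-$ has norm one, and conjugation maps negative frequencies isometrically onto $H^2_0$, so $\mathcal T_\xi$ maps $H^2_0$ into itself. It also satisfies $\norm{\mathcal T_\xi g-\mathcal T_\xi g'}_2\le\lambda\norm{g-g'}_2$, so it is a contraction and has a unique fixed point $g_\xi$.

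With $\hb_\xi=\xi+g_\xi$, set $F=\overline{\hb_\xi}+\widetilde E(\hb_\xi,\rho e^{it})$. The negative-frequency part of $\overline{\hb_\xi}=\bar\xi+\overline{g_\xi}$ is $\overline{g_\xi}$. By the fixed-point equation, $P_-\widetilde E(\hb_\xi,\rho e^{it})=-\overline{g_\xi}$, so $P_-F=0$. Hence $F$ has only nonnegative frequencies, and I define $\kb_\xi$ as its Poisson/Cauchy extension. Continuity of $\kb_\xi$ on $\overline\D$ will follow once $F\in W^{1,2}(\T)$, by the absolute-convergence argument behind \eqref{eq:hardyboundary}.

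For the parameter estimate \eqref{eq:paramH2}, subtract the two fixed-point equations:
\[
 \norm{g_\xi-g_\eta}_2\le\lambda\bigl(|\xi-\eta|+\norm{g_\xi-g_\eta}_2\bigr),
\]
and absorb the last term on the left.

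The main step is the $W^{1,2}$ bound, which I would prove with rotational difference quotients. Fix $s$ and set $g^s(e^{it})=g_\xi(e^{i(t+s)})$. Since rotation commutes with $P_-$ and conjugation, $g^s$ is the fixed point of the same problem with $\rho e^{it}$ replaced by $\rho e^{i(t+s)}$. Comparing $g^s$ with $g_\xi$ as above gives
\[
 \norm{g^s-g_\xi}_2\le\lambda\bigl(\norm{g^s-g_\xi}_2+\rho|e^{is}-1|\bigr),
 \qquad\text{so}\qquad
 \norm{g^s-g_\xi}_2\le\frac{\lambda\rho}{1-\lambda}|e^{is}-1|.
\]
Dividing by $|s|$ and letting $s\to0$ yields $\partial_tg_\xi\in L^2$ with $\norm{\partial_tg_\xi}_2\le\lambda\rho/(1-\lambda)$. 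This is justified by the standard characterization of $W^{1,2}(\T)$ through bounded $L^2$ difference quotients, or equivalently through Fourier coefficients, $\sum n^2|g_n|^2|e^{ins}-1|^2/s^2$ being bounded uniformly in $s$.

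Then \eqref{eq:hardyboundary} gives $\norm{g_\xi}_{L^\infty(\T)}\le B\rho$. By absolute convergence of the Fourier series and the maximum principle, the same bound holds on $\overline\D$. Finally, $F$ is a composition of the Lipschitz function $\widetilde E$ with $W^{1,2}$ boundary data, so $F\in W^{1,2}$ and $\kb_\xi$ is continuous up to the boundary, which completes \eqref{eq:boundaryEK}.
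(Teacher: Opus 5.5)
Your proposal is correct and follows the paper's proof essentially step for step: you run the contraction on $H^2_0$, use rotational difference quotients (which commute with $P_-$ and conjugation) to get the $W^{1,2}$ bound and then \eqref{eq:hardyboundary}, show $P_-F=0$ with a one-dimensional chain rule to obtain $\kb_\xi$, and subtract fixed-point equations for \eqref{eq:paramH2}. The paper makes your difference-quotient characterization explicit via Parseval and Fatou. One small point: compact support of $\widetilde E$ is not among the lemma's hypotheses, but the global Lipschitz bound together with $\widetilde E(0)=0$ already puts the composition in $L^2$.
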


\begin{proof}
\emph{Existence by contraction.}
The right-hand side of \eqref{eq:fixed} maps the complete space
$H^2_0$ into itself. As a function $\mathcal F_\xi$ of $g$ it
satisfies
\[
 \norm{\mathcal F_\xi(g)-\mathcal F_\xi(\widetilde g)}_2
 \le\lambda\norm{g-\widetilde g}_2.
\]
Although complex conjugation is antilinear, it preserves distance;
the Banach fixed-point theorem applies to the underlying real metric
space. The global Lipschitz property also ensures that the
composition with $\widetilde E$ belongs to $L^2$. Thus there is a
unique fixed point.

\smallskip
\noindent\emph{Rotational difference quotients.}
Define $R_\theta g(e^{it})=g(e^{i(t+\theta)})$.
Both $P_-$ and pointwise complex conjugation commute with
$R_\theta$. Rotating \eqref{eq:fixed} and subtracting the original
equation yields
\[
 \norm{R_\theta g_\xi-g_\xi}_2
 \le\lambda\norm{R_\theta g_\xi-g_\xi}_2
       +\lambda\rho|e^{i\theta}-1|.
\]
Therefore
\begin{equation}\label{eq:rotbound}
 \norm{R_\theta g_\xi-g_\xi}_2
 \le\frac{\lambda\rho}{1-\lambda}|e^{i\theta}-1|.
\end{equation}
Writing $g_\xi=\sum_{n\ge1}c_ne^{int}$, Parseval's identity and
Fatou's lemma give
\begin{align*}
 \sum_{n\ge1}n^2|c_n|^2
 &\le\liminf_{\theta\to0}
   \sum_{n\ge1}\frac{|e^{in\theta}-1|^2}{|\theta|^2}|c_n|^2\\
 &\le\left(\frac{\lambda\rho}{1-\lambda}\right)^2.
\end{align*}
Hence $g_\xi\in W^{1,2}(\T)$, and the derivative estimate in
\eqref{eq:gBound} follows. Inequality \eqref{eq:hardyboundary}
gives the remaining bound. This is an actual boundary estimate at
the fixed radius $\rho$; the $H^2$ estimate alone would not
suffice.

\smallskip
\noindent\emph{The second holomorphic function.}
Define the boundary function by the right-hand side of
\eqref{eq:boundaryEK}. The Sobolev chain rule, together with
$g_\xi\in W^{1,2}(\T)$, shows that it belongs to
$W^{1,2}(\T)$. Since
$P_-(\overline{\hb_\xi})=\bar g_\xi$, the conjugate of
\eqref{eq:fixed} says exactly that $P_-\kb_\xi=0$.
Thus its Fourier series has only nonnegative frequencies and
converges absolutely and uniformly on the closed disk. It defines
the required continuous holomorphic extension.

\smallskip
\noindent\emph{Dependence on the transverse parameter.}
Subtracting the fixed-point equations for $\xi$ and $\eta$ gives
\[
 \norm{g_\xi-g_\eta}_2
 \le\lambda\bigl(|\xi-\eta|+\norm{g_\xi-g_\eta}_2\bigr),
\]
which is \eqref{eq:paramH2}.
\end{proof}

\subsection{Vanishing of the Fenchel gap}\label{sec:fenchel-gap}

The essential step is to propagate a boundary equality to the full
disk without differentiating a merely continuous null direction.
For the original and dual psh weights the Fenchel gap is nonnegative;
along the pair of holomorphic graphs it is also subharmonic. Once the
Hardy boundary equation makes the gap vanish on the circle, the
maximum principle forces equality everywhere inside. Fix
\begin{equation}\label{eq:radii}
 s=\rho=\frac R{32}.
\end{equation}
For each $\xi\in\overline{\Delta_s}$, apply Lemma~\ref{lem:hardy}.
By \eqref{eq:gBound}, throughout the closed disk,
\begin{equation}\label{eq:Hrange}
 |\hb_\xi|\le s+B\rho=(1+B)s<\frac R8.
\end{equation}
Furthermore, \eqref{eq:boundaryEK}, $\widetilde E(0)=0$, and the
Lipschitz estimate show that on the circle,
\[
 |\kb_\xi|
 \le(1+\lambda)|\hb_\xi|+\lambda\rho
 \le\bigl((1+\lambda)(1+B)+\lambda\bigr)s<\frac R8.
\]
By the maximum modulus principle, this bound holds on the whole
closed disk. Consequently,
\begin{equation}\label{eq:Kranges}
 (\hb_\xi(\zeta),\rho\zeta)\in\Delta_R^2,
 \qquad
 (\kb_\xi(\zeta),\rho\zeta)\in\Delta_{R/4}^2
 \quad(|\zeta|\le1).
\end{equation}
Thus both the original potential $u$ and its local dual $v$ are
defined along the relevant holomorphic graphs.

\begin{lemma}[Localization of the Hardy boundary equation]
\label{lem:localization}
For every $|\xi|\le s$, the holomorphic pair
$(\hb_\xi,\kb_\xi)$ constructed using $\widetilde E$ takes values
in the domains where $u$ and $v$ are defined, and satisfies
\begin{equation}\label{eq:trueboundary}
 \kb_\xi(e^{it})=u_z(\hb_\xi(e^{it}),\rho e^{it})
 \quad(t\in\R).
\end{equation}
Consequently the Fenchel inequality for the \emph{original}
$u$ and its local dual applies along these disks, with equality
on their common boundary.
\end{lemma}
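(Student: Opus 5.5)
The plan is to check three facts in turn: the range statements in \eqref{eq:Kranges}, the agreement of $\widetilde E$ with $E$ on the boundary circle, and the Fenchel equality there.

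First I check that the pair stays inside the relevant domains. For $|\xi|\le s$, \eqref{eq:Hrange} gives $|\hb_\xi|\le(1+B)s<R/8$ on $\overline\D$. Since $\rho=R/32$, the point $(\hb_\xi(\zeta),\rho\zeta)$ lies in $\Delta_{R/8}\times\Delta_{R/32}$, which is contained in $\Delta_R^2$. For $\kb_\xi$, the boundary bound on the circle uses only \eqref{eq:boundaryEK}, $\widetilde E(0)=0$ and $\Lip(\widetilde E)\le\lambda$. The maximum modulus principle applies because $\kb_\xi$ is holomorphic and continuous on $\overline\D$ by Lemma~\ref{lem:hardy}. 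This gives $|\kb_\xi|<R/8$ on $\overline\D$, hence $(\kb_\xi(\zeta),\rho\zeta)\in\Delta_{R/4}^2$. The numerical inequalities need $(1+\lambda)(1+B)+\lambda<4$ and $1+B<4$, and both are trivial with $\lambda=1/100$ and $B<1/50$. So $u$ is defined along $\zeta\mapsto(\hb_\xi(\zeta),\rho\zeta)$, and $v$ from \eqref{eq:legendredef} (Lemma~\ref{lem:fenchel}) is defined along $\zeta\mapsto(\kb_\xi(\zeta),\rho\zeta)$.

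Next I remove the extension. On the circle, $(\hb_\xi(e^{it}),\rho e^{it})$ lies in $\Delta_R\times\Delta_R$, so \eqref{eq:Eextend} gives $\widetilde E=E$ there. Then \eqref{eq:boundaryEK} together with $E=u_z-\bar z$ gives $\kb_\xi=\overline{\hb_\xi}+u_z(\hb_\xi,\rho e^{it})-\overline{\hb_\xi}=u_z(\hb_\xi,\rho e^{it})$, which is \eqref{eq:trueboundary}. Both sides are continuous in $t$, so the identity holds for every $t$ and not merely almost everywhere. Continuity of $\hb_\xi$ up to the boundary follows from \eqref{eq:hardyboundary} applied to $g_\xi\in W^{1,2}$.

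Finally, for the Fenchel statement, take $z=\hb_\xi(\zeta)$, $p=\kb_\xi(\zeta)$ and $w=\rho\zeta$. The first two steps put these in the region $|z|<R$, $|p|,|w|<R/4$, where \eqref{eq:fenchel} holds, so the gap $u(z,w)+v(p,w)-2\operatorname{Re}(pz)$ is nonnegative on all of $\overline\D$. On $|\zeta|=1$ we have $p=u_z(z,w)$, and the equality clause of Lemma~\ref{lem:fenchel} makes the gap vanish on the common boundary circle.

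The only step needing care is the first: the bound for $\kb_\xi$ is obtained only on $\T$, and we must make sure it transfers to the interior and lands in the smaller domain $\Delta_{R/4}$ of $v$ rather than merely in $\Delta_R$. This is exactly why the radii were chosen as $s=\rho=R/32$.
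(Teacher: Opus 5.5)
Your proposal is correct and follows the paper's proof step for step. You first establish the range bounds \eqref{eq:Hrange}--\eqref{eq:Kranges}, using the maximum modulus principle to carry the circle bound for $\kb_\xi$ into the disk. You then use $\widetilde E=E$ at the primal boundary graph to turn \eqref{eq:boundaryEK} into \eqref{eq:trueboundary}, and finish with the inequality and equality clauses of Lemma~\ref{lem:fenchel}.

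There is one harmless slip: the second factor should be the closed disk $\overline{\Delta_{\rho}}$ rather than $\Delta_{R/32}$, since $|\rho\zeta|=\rho$ on $\T$. This closed disk still lies well inside both $\Delta_R$ and $\Delta_{R/4}$.
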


\begin{proof}
The fixed point in Lemma~\ref{lem:hardy} was obtained using the
globally Lipschitz extension $\widetilde E$, rather than $u_z$.
The range bounds \eqref{eq:Hrange}--\eqref{eq:Kranges} first place
the primal and dual graphs in $\Delta_R^2$ and
$\Delta_{R/4}^2$, respectively. On the boundary of the disk,
$\widetilde E=E=u_z-\bar z$ at the primal graph, so
\eqref{eq:boundaryEK} gives \eqref{eq:trueboundary}. The equality
case of Lemma~\ref{lem:fenchel} now applies on that boundary;
its inequality holds throughout the disk by the same range bounds.
\end{proof}

\begin{proposition}
\label{prop:gap}
For every $\xi\in\overline{\Delta_s}$, the function
\begin{align}\label{eq:gap}
 F_\xi(\zeta)
 ={}&u(\hb_\xi(\zeta),\rho\zeta)
     +v(\kb_\xi(\zeta),\rho\zeta)\notag\\
    &-2\operatorname{Re}(\hb_\xi(\zeta)\kb_\xi(\zeta))
\end{align}
is continuous on the closed disk and identically zero on $\D$.
Consequently,
\begin{equation}\label{eq:interiorgrad}
 \kb_\xi(\zeta)=u_z(\hb_\xi(\zeta),\rho\zeta)
 \qquad(\zeta\in\D),
\end{equation}
and $\zeta\mapsto u(\hb_\xi(\zeta),\rho\zeta)$ is harmonic.
\end{proposition}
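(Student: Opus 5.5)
The plan is to show that $F_\xi$ is continuous on $\overline\D$, nonnegative, subharmonic on $\D$, and zero on $\T$. The maximum principle then gives $F_\xi\equiv0$. Both consequences follow from the equality case of the Fenchel inequality and from the rank-one structure.

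\emph{Continuity, sign and boundary values.} Continuity on $\overline\D$ holds because $\hb_\xi,\kb_\xi$ are continuous on $\overline\D$ and, by \eqref{eq:Kranges}, take values where $u$ and $v$ are continuous. By \eqref{eq:Kranges} we have $|\hb_\xi|<R$ and $|\kb_\xi|,|\rho\zeta|<R/4$, so the Fenchel inequality \eqref{eq:fenchel} gives $F_\xi\ge0$ on $\overline\D$. On $\T$, Lemma~\ref{lem:localization} gives $\kb_\xi=u_z(\hb_\xi,\rho e^{it})$, and the equality case of Lemma~\ref{lem:fenchel} gives $F_\xi=0$ there.

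\emph{Subharmonicity.} The three terms of $F_\xi$ behave as follows on $\D$:
\begin{itemize}
\item $\zeta\mapsto u(\hb_\xi(\zeta),\rho\zeta)$ is subharmonic, since $u$ is psh and $C^2$ composed with a holomorphic map; its Laplacian is the Levi form of $u$ applied to $(\hb_\xi',\rho)$.
\item $\zeta\mapsto v(\kb_\xi(\zeta),\rho\zeta)$ is subharmonic, since $v$ is psh by Lemma~\ref{lem:dualle}.
\item $\operatorname{Re}(\hb_\xi\kb_\xi)$ is harmonic, being the real part of a holomorphic function.
\end{itemize}
Hence $F_\xi$ is subharmonic on $\D$. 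Since $F_\xi$ is continuous on $\overline\D$ and vanishes on $\T$, the maximum principle gives $F_\xi\le0$. Together with $F_\xi\ge0$ this yields $F_\xi\equiv0$.

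\emph{Consequences.} At each $\zeta\in\D$ we have $F_\xi(\zeta)=0$, so the equality case of Lemma~\ref{lem:fenchel} (valid since $|\hb_\xi|<R$ and $|\kb_\xi|,|\rho\zeta|<R/4$) gives $\kb_\xi(\zeta)=u_z(\hb_\xi(\zeta),\rho\zeta)$, which is \eqref{eq:interiorgrad}.

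For harmonicity, observe that $F_\xi\equiv0$ forces $\Delta F_\xi=0$. This Laplacian is the sum of two nonnegative $C^0$ terms: the Levi form of $u$ along the primal graph and the Levi form of $v$ along the dual graph. The pluriharmonic term contributes nothing. Therefore each term vanishes, and in particular $\partial_\zeta\partial_{\bar\zeta}\,u(\hb_\xi(\zeta),\rho\zeta)=0$.

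This step needs $F_\xi\in C^2(\D)$ so that the Laplacian splits pointwise. That holds because $u,v\in C^2$ (Lemma~\ref{lem:fenchel}) and the maps are holomorphic. Thus $u$ is harmonic along the graph, which completes the proof.

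The main subtlety is not the analysis but checking that all evaluations stay in the domains of validity of Lemma~\ref{lem:fenchel}. The strict inequalities in \eqref{eq:Hrange}--\eqref{eq:Kranges} provide this, and Lemma~\ref{lem:localization} converts the $\widetilde E$-boundary equation into the true gradient equation.
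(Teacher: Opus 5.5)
Your proposal is correct and follows the paper's proof essentially step for step: nonnegativity from the Fenchel inequality, subharmonicity of $F_\xi$ from the plurisubharmonicity of $u$ and of the dual $v$ (Lemma~\ref{lem:dualle}), vanishing on $\T$ via Lemma~\ref{lem:localization}, the maximum principle, the equality case of Lemma~\ref{lem:fenchel} for \eqref{eq:interiorgrad}, and splitting the Laplacian of the $C^2$ sum into two nonnegative parts. Your explicit checks of continuity up to $\T$ and of the range conditions from \eqref{eq:Kranges} spell out what the paper leaves implicit.
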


\begin{proof}
Lemma~\ref{lem:fenchel} gives $F_\xi\ge0$.
By Lemma~\ref{lem:dualle}, the first two terms in
\eqref{eq:gap} are subharmonic as compositions of plurisubharmonic
functions with holomorphic maps. The final term is the real part of
a holomorphic function and is therefore harmonic. Hence $F_\xi$
is subharmonic. The boundary identity of Lemma~\ref{lem:localization}
forces equality in the Fenchel inequality, so
$F_\xi|_{\T}=0$. The maximum principle gives $F_\xi\le0$.
It follows that
\[
 F_\xi\equiv0.
\]
The strict-convexity equality condition in the Fenchel inequality
then yields \eqref{eq:interiorgrad}.

The first two terms in \eqref{eq:gap} are $C^2$ subharmonic
functions, while their sum is now harmonic. Their Laplacians are
nonnegative and sum to zero, so each Laplacian vanishes. In
particular, the original potential is harmonic along the
constructed holomorphic graph.
\end{proof}

In the physical $w$ coordinate, set
\begin{equation}\label{eq:hphys}
 h_\xi(w)=\hb_\xi(w/\rho),\qquad |w|<\rho.
\end{equation}
Then $h_\xi(0)=\xi$, and
\begin{equation}\label{eq:harmonicfinal}
 \ddc_w\,u(h_\xi(w),w)=0.
\end{equation}
The derivative of the graph map $w\mapsto(h_\xi(w),w)$ is
$(h_\xi'(w),1)$, which never vanishes. Thus the resulting disks
are nondegenerate.

\begin{remark}[A translation model]
For $u(z,w)=|z+w^2|^2$ one has $u_z=\bar z+\bar w^2$ and
$v(p,w)=|p|^2-2\operatorname{Re}(pw^2)$. The boundary equation
gives $\hb_\xi(\zeta)=\xi-\rho^2\zeta^2$ and
$\kb_\xi(\zeta)=\bar\xi$. Thus
$h_\xi(w)=\xi-w^2$ and $u(h_\xi(w),w)=|\xi|^2$.
This checks the complex-conjugation convention in the
Legendre--Hardy construction.
\end{remark}

\section{Bi-Lipschitz foliations and uniqueness}
\label{sec:foliation}
\subsection{Disjointness, covering, and flow boxes}

By \eqref{eq:hardypoint} and \eqref{eq:paramH2}, for
$|\zeta|\le1/2$,
\begin{equation}\label{eq:gparampoint}
 |g_\xi(\zeta)-g_\eta(\zeta)|
 \le\kappa|\xi-\eta|,
 \qquad\kappa=\frac\lambda{\sqrt3(1-\lambda)}<\frac1{50}.
\end{equation}
Therefore, for $|w|\le\rho/2$,
\begin{equation}\label{eq:biholo}
 (1-\kappa)|\xi-\eta|
 \le|h_\xi(w)-h_\eta(w)|
 \le(1+\kappa)|\xi-\eta|.
\end{equation}
In particular, distinct graphs are disjoint, and the family
depends continuously on its transverse parameter.

\medskip
\noindent\emph{Covering a fixed bidisk.}
Fix $|z|<s/2$ and $|w|<\rho/2$. Finding the leaf through $(z,w)$
is equivalent to solving
\begin{equation}\label{eq:surjfixed}
 \xi=z-g_\xi(w/\rho).
\end{equation}
The right-hand side is $\kappa$-Lipschitz in $\xi$, and
\[
 |z-g_\xi(w/\rho)|\le s/2+B\rho<3s/4.
\]
It is therefore a contraction of $\overline{\Delta_s}$ into
itself. The equation has a unique solution in $\Delta_s$, and the
graphs cover $\Delta_{s/2}\times\Delta_{\rho/2}$.

\medskip
\noindent\emph{Open image and transverse coordinates.}
Consider
\[
 \Phi:\Delta_s\times\Delta_{\rho/2}\longrightarrow\C^2,
 \qquad\Phi(\xi,w)=(h_\xi(w),w).
\]
From \eqref{eq:gBound} and a Cauchy estimate, for
$|w|\le\rho/2$,
\begin{equation}\label{eq:wslope}
 |h_\xi'(w)|\le4B.
\end{equation}
Thus \eqref{eq:biholo} and \eqref{eq:wslope} imply a local
Lipschitz bound for $\Phi$. If $z=h_\xi(w)$ and
$z'=h_\eta(w')$, then
\[
 (1-\kappa)|\xi-\eta|
 \le|z-z'|+4B|w-w'|,
\]
which also bounds the inverse map in the Lipschitz norm.

To see directly that $W=\Phi(\Delta_s\times\Delta_{\rho/2})$
is open, fix an interior parameter $(\xi_0,w_0)$ and a small
closed disk about $\xi_0$. Whenever $(z,w)$ is sufficiently
close to $(h_{\xi_0}(w_0),w_0)$, the map
$\xi\mapsto z-g_\xi(w/\rho)$ sends that closed disk into
itself and remains a contraction. Hence a neighborhood of the
image point lies in $W$. It follows that $\Phi$ is a locally
bi-Lipschitz homeomorphism onto an open image.

By \eqref{eq:harmonicfinal}, every holomorphic graph is tangent
to $\ker\ddc u$. Since the latter is one-dimensional over
$\C$ throughout the chosen positive-rank neighborhood, its
tangent line equals the kernel. Finally, restoring the
pluriharmonic polynomial subtracted in Section~\ref{sec:normalization}
preserves harmonicity along each leaf. This proves the existence,
covering, and bi-Lipschitz assertions in Theorem~\ref{thm:main}(i).

\subsection{Uniqueness of holomorphic leaf germs}\label{sec:uniqueness}

The original null-slope coefficient $q=u_{z\bar w}/u_{z\bar z}$
is a priori only continuous. The constructed flow box improves its
regularity in the transverse variable, which is what makes local
uniqueness possible.

\begin{proposition}\label{prop:transverse-lip}
Let $p\in\mathcal R$. In local holomorphic coordinates $(z,w)$
with $u_{z\bar z}>0$, there exists a smaller neighborhood $W_p$
and a constant $C_p$ such that
\begin{equation}\label{eq:qLip}
 |q(z,w)-q(z',w)|\le C_p|z-z'|,
 \qquad q=\frac{u_{z\bar w}}{u_{z\bar z}},
\end{equation}
whenever $(z,w),(z',w)\in W_p$ lie in the same sufficiently small
transverse slice. In particular, the bound is uniform in $w$ on
compactly contained flow-box slices.
\end{proposition}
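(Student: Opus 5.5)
Work in the normalized coordinates of Lemma~\ref{lem:norm} around $p$ and use the flow box $\Phi(\xi,w)=(h_\xi(w),w)$ from Section~\ref{sec:foliation}. The key observation is that on each plaque, \eqref{eq:harmonicfinal} together with \eqref{eq:graphLevi} and $a>0$ forces the leaf equation \eqref{eq:leafode}:
\[
 q(h_\xi(w),w)=-\overline{h_\xi'(w)},\qquad |w|<\rho/2 .
\]
So $q$ is the conjugate slope of the leaf through the point, read off in flow-box coordinates. Given $(z,w),(z',w)$ in a common slice, write $z=h_\xi(w)$ and $z'=h_\eta(w)$, with $\xi,\eta$ obtained from the covering step \eqref{eq:surjfixed}. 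Then
\[
 |q(z,w)-q(z',w)|=|h_\xi'(w)-h_\eta'(w)|=\rho^{-1}|g_\xi'(w/\rho)-g_\eta'(w/\rho)|.
\]

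The main step is to bound $|g_\xi'(\zeta)-g_\eta'(\zeta)|$ for $|\zeta|\le1/2$. Apply the Hardy derivative estimate \eqref{eq:hardyder} to $g_\xi-g_\eta\in H^2_0$ and combine it with the parameter estimate \eqref{eq:paramH2}. This gives
\[
 |g_\xi'(\zeta)-g_\eta'(\zeta)|\le C_0\frac{\lambda}{1-\lambda}|\xi-\eta|
\]
with an absolute constant $C_0$, valid uniformly on $|\zeta|\le1/2$. The lower bound in \eqref{eq:biholo} then gives $|\xi-\eta|\le(1-\kappa)^{-1}|z-z'|$. Hence \eqref{eq:qLip} holds on $W_p=\Phi(\Delta_s\times\Delta_{\rho/2})$ with
\[
 C_p=\frac{C_0\lambda}{\rho(1-\lambda)(1-\kappa)}.
\]
This constant is uniform in $w$, because every estimate used is uniform over $|w|\le\rho/2$.

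It remains to handle coordinate issues.
\begin{itemize}[leftmargin=2em]
\item The normalization subtracts a pluriharmonic polynomial. This leaves $u_{z\bar z}$ and $u_{z\bar w}$ unchanged, so $q$ is unchanged.
\item The linear change of coordinates in Lemma~\ref{lem:norm} may differ from the given coordinates in which $u_{z\bar z}>0$. Near $p$, $q$ in the given coordinates is a smooth (Möbius-type) function of the slope field in the normalized coordinates, with bounded derivatives because $a>0$. A given transverse slice $\{w=\text{const}\}$ is a complex line that is transverse to the leaves near $p$, since the leaf directions are not tangent to it when $u_{z\bar z}>0$.
\item Along such a transverse line the flow-box parameter $\xi$ is still bi-Lipschitz, by the same contraction argument as in \eqref{eq:surjfixed}. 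The derivative bound on leaves is likewise controlled by Cauchy estimates on the plaques.
\end{itemize}

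I expect this coordinate-transfer step to be the main obstacle. The cleanest route is to note that one can choose the normalizing linear map to fix the $w$-axis direction's slices. Concretely, take a map of the form $z\mapsto z+\ell w$, $w\mapsto w$ followed by scaling, which suffices when $a>0$. Then slices are preserved exactly, and $q$ changes only by an additive constant and a positive factor.
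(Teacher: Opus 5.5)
Your proposal is correct and is essentially the paper's proof. It uses the same three ingredients: the leaf identity $q(h_\xi(w),w)=-\overline{h_\xi'(w)}$, the Hardy derivative bound \eqref{eq:hardyder} applied to $g_\xi-g_\eta$ together with \eqref{eq:paramH2} (your $C_0$ is $\sqrt{80/27}$ at $|\zeta|=1/2$), and the lower bound in \eqref{eq:biholo} to replace $|\xi-\eta|$ by $|z-z'|$.

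Your extra coordinate step is also sound, and it makes explicit something the paper leaves implicit. The shear $z\mapsto\sqrt{a(p)}\,\bigl(z+\overline{q(p)}\,w\bigr)$, $w\mapsto w$ already achieves the normal form of Lemma~\ref{lem:norm} and preserves the slices $\{w=\mathrm{const}\}$. It turns $q$ into $\sqrt{a(p)}\,(q-q(p))$, so the Lipschitz bound transfers back with the same constant.
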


\begin{proof}
Applying \eqref{eq:hardyder} to $g_\xi-g_\eta$ gives, for
$|w|\le\rho/2$,
\begin{equation}\label{eq:slopeparam}
 |h_\xi'(w)-h_\eta'(w)|
 \le\frac1\rho\sqrt{\frac{80}{27}}\,
       \frac\lambda{1-\lambda}|\xi-\eta|.
\end{equation}
By \eqref{eq:graphLevi} and harmonicity along the constructed leaves,
\[
 q(h_\xi(w),w)=-\overline{h_\xi'(w)}.
\]
Since each transverse slice of the flow box is covered by the
points $h_\xi(w)$, and \eqref{eq:biholo} controls the inverse
parameterization, combining these formulas yields
\eqref{eq:qLip}, uniformly on a smaller flow box.
\end{proof}

In normalized coordinates these estimates give
$C_p\le C(\lambda)/\rho$ on the smaller flow box. The local
radius and constants depend on the modulus of continuity of $D^2u$
and the normalizing coordinates.
This proves the transverse regularity assertion of
Theorem~\ref{thm:main}(ii).

\begin{corollary}\label{cor:unique}
In the positive-rank neighborhood of Theorem~\ref{thm:main},
any two nondegenerate holomorphic curve germs through the same
point, on which $u$ is harmonic, coincide.
\end{corollary}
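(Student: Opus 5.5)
Let $\gamma$ be a nondegenerate holomorphic curve germ through a point $x_0$ of the flow box, and suppose $u|_\gamma$ is harmonic. The plan is to show that $\gamma$ coincides with the plaque $\Gamma_{\xi_0}$ through $x_0$. Since any two such germs would then both equal the plaque, they would coincide with each other.

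\emph{Step 1: write $\gamma$ as a graph.} Since $a=u_{z\bar z}>0$ on the positive-rank neighborhood, the tangent line of $\gamma$ is the one-dimensional kernel of $\ddc u$ at every point where $\gamma$ is smooth. We first check that this kernel is never the $z$-direction. A tangent vector $(1,0)$ would give Levi form value $a>0$; by \eqref{eq:graphLevi} applied to a parametrization, harmonicity forces $\ddc u(\gamma',\gamma')=0$, so $(1,0)$ cannot be tangent. If $\gamma$ is parametrized by $\phi(\tau)=(\phi_1,\phi_2)$ with $\phi'(0)\ne0$, then $\phi_2'(0)\ne0$, because otherwise the tangent would be $(1,0)$. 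Hence $\gamma$ is locally a graph $z=k(w)$ with $k$ holomorphic near $w_0$ and $k(w_0)=z_0$. By \eqref{eq:graphLevi}, $k$ solves the leaf equation \eqref{eq:leafode}, namely $k'(w)=-\overline{q(k(w),w)}$.

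\emph{Step 2: Gronwall comparison with the plaque.} The plaque $h_{\xi_0}$ solves the same equation, as shown in the proof of Proposition~\ref{prop:transverse-lip}, with $h_{\xi_0}(w_0)=z_0$. Set $\delta(w)=k(w)-h_{\xi_0}(w)$. For $w$ close to $w_0$ both graph points lie in the slice neighborhood $W_p$, so \eqref{eq:qLip} gives $|\delta'(w)|\le C_p|\delta(w)|$. We integrate this along the real segment from $w_0$ to $w$. Writing $w=w_0+re^{i\theta}$ and $f(r)=\delta(w_0+re^{i\theta})$, we get $|f(r)|\le C_p\int_0^r|f|$ with $f(0)=0$, and Gronwall's inequality gives $f\equiv0$ on each ray. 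Therefore $k=h_{\xi_0}$ near $w_0$, and since both are holomorphic they agree on a common disk.

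\emph{Main obstacle.} The delicate point is Step 1 when the germ $\gamma$ is singular, or when the hypothesis is only that $\phi'(0)\ne0$ for a given parametrization. Nondegeneracy supplies $\phi'(0)\ne0$, which is what makes the graph representation possible. The Lipschitz bound \eqref{eq:qLip} holds only in $z$ at fixed $w$, but that is exactly what the ODE comparison along rays in $w$ needs, because $w$ is the same for both points being compared.
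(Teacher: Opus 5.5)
Your proof is correct and follows essentially the same route as the paper. You use $a>0$ to write each nondegenerate germ as a graph $z=k(w)$ solving $k'=-\overline{q(k,w)}$, and then apply Gronwall with the transverse Lipschitz bound of Proposition~\ref{prop:transverse-lip}. The differences are cosmetic: you compare each germ with the plaque $h_{\xi_0}$ rather than the two germs directly with each other, and you integrate along all rays from $w_0$ instead of along one horizontal segment followed by the identity theorem.
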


\begin{proof}
Since $a>0$, the $w$ component of the kernel direction does
not vanish. Every such curve germ can therefore be written as
$z=k(w)$. By \eqref{eq:graphLevi}, it satisfies
\eqref{eq:leafode}. Restrict to a horizontal real interval
$w=x+it_0$ through the intersection point. The two graphs solve
the same real ODE for a complex-valued unknown, whose right-hand
side is locally Lipschitz in the unknown by
Proposition~\ref{prop:transverse-lip}.
The Gronwall uniqueness theorem implies that they agree on a
short real interval. The holomorphic identity theorem then
implies equality of the germs.
\end{proof}

By Corollary~\ref{cor:unique}, two plaques from overlapping
positive-rank flow boxes that pass through the same point define the
same local holomorphic curve germ. Hence the flow-box foliations are
compatible on their overlaps, proving the last assertion of
Theorem~\ref{thm:main}(i).

\section{Transverse measures and laminarity}\label{sec:currents}
\subsection{The transverse measure in a flow box}

We now prove \eqref{eq:currentintro} separately from the leaf
existence statement. Let $T=\ddc u$, and let $W$ and $\Phi$ be as
in Section~\ref{sec:foliation}. We regard a $(1,1)$-current as
the current defined by a real two-form; the complex structure of
the leaves will subsequently provide positivity.

\begin{lemma}
\label{lem:pullback}
Let $T$ be a closed real two-form with continuous coefficients
on an open set $W$, and let $\Phi:V\to W$ be a locally bi-Lipschitz
homeomorphism between open subsets of $\mathbb R^4$. Define
$\Phi^*T$ using the almost-everywhere differential $D\Phi$.
Then $\Phi^*T$ has locally bounded coefficients, and
$d(\Phi^*T)=0$ in the distributional sense on $V$.
\end{lemma}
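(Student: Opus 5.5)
The plan is to approximate $\Phi$ by smooth maps and pass to the limit. For smooth maps, pullback commutes with $d$; the difficulty is that $D\Phi$ converges only weakly, and pullback is nonlinear in $D\Phi$. Since $T$ is a $2$-form and $\Phi^*T$ involves $2\times2$ minors of $D\Phi$, the key fact I will use is the weak continuity of Jacobian minors (their null-Lagrangian structure).

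First, local boundedness is immediate. Since $\Phi$ is locally Lipschitz, Rademacher's theorem gives $D\Phi$ almost everywhere with $|D\Phi|\le L$ on compact sets. Writing $T=\sum T_{ij}\,dx_i\wedge dx_j$, we have
\[
 \Phi^*T=\sum_{i,j} (T_{ij}\circ\Phi)\, d\Phi_i\wedge d\Phi_j .
\]
The coefficients $T_{ij}\circ\Phi$ are continuous, and the minors are bounded by $L^2$.

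For closedness, I localize: it suffices to check $\int_V \Phi^*T\wedge d\psi=0$ for every compactly supported smooth $1$-form $\psi$ on a small ball $V'\Subset V$. Two smoothings are made.

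\begin{enumerate}[label=\textup{(\arabic*)},leftmargin=2.3em]
\item Mollify $\Phi$ to $\Phi_\epsilon=\Phi*\eta_\epsilon$. Then $\Phi_\epsilon\to\Phi$ uniformly on $V'$, and $D\Phi_\epsilon\to D\Phi$ almost everywhere, boundedly (Lebesgue points). The images $\Phi_\epsilon(V')$ eventually lie in a fixed compact subset of $W$, because $\Phi(\overline{V'})$ is compact in the open set $W$.
\item Mollify $T$ on that compact set to get smooth closed forms $T_\delta=T*\eta_\delta$. Closedness is preserved, since $d$ commutes with convolution, and $T_\delta\to T$ uniformly.
\end{enumerate}

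For smooth $\Phi_\epsilon$ and smooth closed $T_\delta$, $\Phi_\epsilon^*T_\delta$ is closed classically, so $\int \Phi_\epsilon^*T_\delta\wedge d\psi=0$. I then take limits. Letting $\delta\to0$ with $\epsilon$ fixed is straightforward, since the coefficients converge uniformly and the minors of $D\Phi_\epsilon$ are bounded. Letting $\epsilon\to0$, the integrand $(T_{ij}\circ\Phi_\epsilon)\,\det(\text{minor of }D\Phi_\epsilon)\,\partial\psi$ converges almost everywhere: $T_{ij}\circ\Phi_\epsilon\to T_{ij}\circ\Phi$ uniformly by continuity of $T$, and the minors converge almost everywhere because $D\Phi_\epsilon\to D\Phi$ a.e. The integrand is also uniformly bounded, so dominated convergence gives $\int\Phi^*T\wedge d\psi=0$.

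The main obstacle is the claim that $D\Phi_\epsilon\to D\Phi$ almost everywhere. This holds because $D\Phi\in L^\infty_{\loc}$, $D(\Phi*\eta_\epsilon)=(D\Phi)*\eta_\epsilon$ (the distributional derivative of a Lipschitz map is its a.e. derivative), and mollifications of $L^1_{\loc}$ functions converge at Lebesgue points. With that fact, strong a.e. convergence sidesteps any need for weak continuity of minors, and the whole argument reduces to dominated convergence.

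Bi-Lipschitz is more than is needed here: only the Lipschitz property of $\Phi$, continuity of $T$, and the compactness of $\Phi(\overline{V'})$ in $W$ are used. The inverse Lipschitz bound becomes relevant only later, when pushing forward.
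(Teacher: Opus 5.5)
Your proof is correct and takes essentially the same route as the paper. Both arguments mollify $T$ (which preserves closedness) and $\Phi$, use classical closedness of $\Phi_\epsilon^*T_\delta$, and pass to the limit using \emph{strong} convergence of $D\Phi_\epsilon$ together with the uniform Lipschitz bound. The differences are minor: you send $\delta\to0$ before $\epsilon\to0$ and use a.e.\ convergence with dominated convergence, whereas the paper fixes $\delta$ first and uses $L^p$ convergence of the derivatives so that the $2\times2$ minors converge in $L^1$.
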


\begin{proof}
Fix $V'\Subset V''\Subset V$. The compact set
$\Phi(\overline{V''})$ lies in $W$ at positive distance from
$\partial W$. Convolving the coefficient functions of $T$ on an
open neighborhood of this compact set, with convolution radius
smaller than that distance, gives smooth forms $T_\delta$
converging uniformly to $T$ there. Convolution commutes with $d$,
so $dT_\delta=0$ wherever the convolution is defined.
For fixed sufficiently small $\delta>0$, mollify the coordinate
components of the locally Lipschitz map $\Phi$ on $V''$ to obtain
smooth maps $\Phi_j$ on $V'$ satisfying
\[
 \Phi_j\to\Phi\ \text{uniformly},\qquad
 \sup_j\|D\Phi_j\|_{L^\infty(V')}<\infty,\qquad
 D\Phi_j\to D\Phi\ \text{in }L^p(V')
\]
for every finite $p$. Uniform convergence places $\Phi_j(\overline{V'})$ in the
same open neighborhood of $\Phi(\overline{V''})$ where $T_\delta$
is defined, for all sufficiently large $j$. Each coefficient of the
pullback is a sum of terms of the form
\[
 (T_\delta)_{ab}(\Phi_j)
 \bigl(\partial_i(\Phi_j)^a\,\partial_\ell(\Phi_j)^b
       -\partial_\ell(\Phi_j)^a\,\partial_i(\Phi_j)^b\bigr).
\]
The uniform convergence of $\Phi_j$, the uniform bounds on
$D\Phi_j$, and the strong $L^2$ convergence of the derivatives
show that each product of two derivatives converges in $L^1$;
the continuous coefficients $T_\delta(\Phi_j)$ converge uniformly.
Hence $\Phi_j^*T_\delta\to\Phi^*T_\delta$ in $L^1(V')$.
For smooth maps and forms, $d(\Phi_j^*T_\delta)=0$.
Passing to distributions yields $d(\Phi^*T_\delta)=0$.
Finally, the $L^\infty$ bound for $D\Phi$ gives
\[
 \|\Phi^*T_\delta-\Phi^*T\|_{L^1(V')}
 \le C_{V'}\|T_\delta-T\|_{C^0(\overline{\Phi(V')})}
 \longrightarrow0.
\]
Another distributional limit proves $d(\Phi^*T)=0$ on $V'$.
Since $V'$ was arbitrary, the lemma follows.
\end{proof}

Write $\xi=x+iy$ and $w=r+i\tau$. Derivatives in the $r,\tau$
directions are tangent to the leaves and hence lie in the
kernel of $T$. Thus every coefficient of $\Phi^*T$ containing
$dr$ or $d\tau$ vanishes. There exists a real-valued
$A\in L^\infty_{\loc}$ such that
\begin{equation}\label{eq:pullA}
 \Phi^*T=A(\xi,w)\,dx\wedge dy.
\end{equation}
Lemma~\ref{lem:pullback} implies
\[
 \partial_rA=\partial_\tau A=0
\]
in the sense of distributions. Consequently, $A=A(\xi)$
almost everywhere.

\medskip
\noindent\emph{Identification of the transverse density.}
It is important not to assign a value on the measure-zero
slice $w=0$ to an arbitrary $L^\infty$ representative of $A$.
For almost every $(\xi,w)$, the transverse coefficient of
$\Phi^*T$ is
\begin{equation}\label{eq:jacA}
 A(\xi,w)=2a(h_\xi(w),w)\,\det_{\R}D_\xi h_\xi(w).
\end{equation}
There is a stronger parameter estimate available. By
\eqref{eq:hardypoint} and \eqref{eq:paramH2},
\begin{equation}\label{eq:identitysmall}
 \Lip_\xi\bigl(h_\xi(w)-\xi\bigr)
 \le\frac\lambda{1-\lambda}
       \frac{|w|/\rho}{\sqrt{1-|w|^2/\rho^2}}
 \longrightarrow0\qquad(w\to0).
\end{equation}
For each fixed $w$, Rademacher's theorem and the defining
property of the Lipschitz constant give
\[
 \mathop{\mathrm{ess\,sup}}_{\xi}
 \|D_\xi h_\xi(w)-I\|
 \le\Lip_\xi(h_\xi(w)-\xi).
\]
The right-hand side tends to zero. Consequently
$\det_{\R}D_\xi h_\xi(w)\to1$ in uniform essential bounds.
Meanwhile $h_\xi(w)\to\xi$ locally uniformly and $a$ is
continuous, so $a(h_\xi(w),w)\to a(\xi,0)$ uniformly on
compact transverse disks. By Fubini's theorem we may choose
$w_j\to0$ such that $A(\xi,w_j)=A(\xi)$ and
\eqref{eq:jacA} hold for almost every $\xi$ on each chosen slice.
The right side of \eqref{eq:jacA} converges to
$2a(\xi,0)$ in $L^1$ on compact transverse disks; the left
side is the same function $A(\xi)$ for all these slices.
Therefore
\begin{equation}\label{eq:Atarget}
 A(\xi)=2a(\xi,0)\quad\text{for almost every }\xi.
\end{equation}
Thus
\begin{equation}\label{eq:pullTfinal}
 \Phi^*T=i\,a(\xi,0)\,d\xi\wedge d\bar\xi.
\end{equation}

\begin{proposition}
\label{prop:laminar}
On $W$,
\begin{equation}\label{eq:laminar}
 T=\int_{\Delta_s}[\Gamma_\xi] \,2a(\xi,0)\area(\xi).
\end{equation}
\end{proposition}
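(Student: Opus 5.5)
Having established \eqref{eq:pullTfinal}, the proof reduces to a change of variables. I would test both sides of \eqref{eq:laminar} against a smooth compactly supported $2$-form $\varphi$ on $W$, which may be taken of bidegree $(1,1)$. On the left,
\[
 \langle T,\varphi\rangle=\int_W T\wedge\varphi .
\]
Since $T$ has continuous coefficients and $\Phi$ is a locally bi-Lipschitz homeomorphism, the area formula lets me write this as an integral of $\Phi^*(T\wedge\varphi)$ over $\Delta_s\times\Delta_{\rho/2}$. The pullback uses the a.e.\ differential. I also need that $\Phi$ preserves orientation: its Jacobian is $\det_\R D_\xi h_\xi(w)$, which is close to $1$ by \eqref{eq:identitysmall} together with \eqref{eq:biholo}. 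I would therefore begin by checking that
\[
 \Phi^*(T\wedge\varphi)=\Phi^*T\wedge\Phi^*\varphi
\]
holds a.e. This is pointwise multilinear algebra at points where $\Phi$ is differentiable, so it is harmless.

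Next I would substitute \eqref{eq:pullTfinal}. Because $\Phi^*T=2a(\xi,0)\,dx\wedge dy$, only the $dr\wedge d\tau$ component of $\Phi^*\varphi$ survives in the wedge. That component is, at a.e.\ point,
\[
 \varphi_{(\xi,w)}\bigl(\partial_r\Phi,\partial_\tau\Phi\bigr).
\]
The derivatives $\partial_r\Phi$ and $\partial_\tau\Phi$ exist classically, since $w\mapsto\Phi(\xi,w)$ is holomorphic for every $\xi$. The function
\[
 \xi\mapsto\int_{|w|<\rho/2}(\iota_\xi)^*\Phi^*\varphi,
 \qquad \iota_\xi(w)=(\xi,w),
\]
is exactly the pairing $\langle[\Gamma_\xi],\varphi\rangle$, where the plaque is oriented by the holomorphic parameter $w$. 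Fubini's theorem then gives
\[
 \langle T,\varphi\rangle=\int_{\Delta_s}\langle[\Gamma_\xi],\varphi\rangle\,2a(\xi,0)\,dA(\xi),
\]
up to the orientation convention. That convention I would fix by comparing the product orientation $dx\wedge dy\wedge dr\wedge d\tau$ with the standard orientation of $\C^2$, and by using $\tfrac i2\,d\xi\wedge d\bar\xi=dx\wedge dy$. As a consistency check, I would run the whole computation on the translation model $u=|z+w^2|^2$, where $a\equiv1$ and $h_\xi(w)=\xi-w^2$.

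Two measurability and integrability points remain. First, $\xi\mapsto\langle[\Gamma_\xi],\varphi\rangle$ is continuous, because $h_\xi$ depends continuously on $\xi$ locally uniformly in $w$ with bounded derivatives by \eqref{eq:wslope}. Second, the density $2a(\xi,0)$ is continuous and bounded on $\Delta_s$. Together these make the right side of \eqref{eq:laminar} a well-defined positive current. Positivity also follows independently, since each $[\Gamma_\xi]$ is positive and $a>0$.

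The main obstacle I expect is rigor in the change of variables for a merely Lipschitz $\Phi$. The area formula for the $4$-form $T\wedge\varphi$ needs $\Phi$ injective, which holds by \eqref{eq:biholo}, and it needs the Jacobian sign. One further subtlety: since $\Phi$ is a homeomorphism onto the open set $W$, the test form $\varphi$ is supported in a compact subset of $W$, so its pullback is compactly supported in $\Delta_s\times\Delta_{\rho/2}$. This lets Fubini apply without boundary terms.
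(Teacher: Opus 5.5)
Your proposal is correct and follows essentially the same route as the paper. You test against a compactly supported two-form, use that $\Phi$ preserves orientation because each transverse slice map is a perturbation of the identity with Lipschitz constant below one (via \eqref{eq:identitysmall}), apply the bi-Lipschitz change of variables together with \eqref{eq:pullTfinal}, and finish with Fubini to obtain $\int_{\Gamma_\xi}\varphi$ weighted by $2a(\xi,0)$; your extra care about the a.e.\ wedge identity, the classical $w$-derivatives along plaques, and measurability of $\xi\mapsto\langle[\Gamma_\xi],\varphi\rangle$ fills in details the paper leaves implicit.
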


\begin{proof}
Let $\psi$ be a smooth compactly supported real two-form on $W$.
The map $\Phi$ preserves the real orientation: on each
transverse slice it is a perturbation of the identity with
Lipschitz constant less than one, and its $w$ component is
unchanged. The bi-Lipschitz change-of-variables formula,
\eqref{eq:pullTfinal}, and Fubini's theorem give
\begin{align*}
 \langle T,\psi\rangle
 &=\int_W T\wedge\psi\\
 &=\int_{\Delta_s\times\Delta_{\rho/2}}
       i\,a(\xi,0)\,d\xi\wedge d\bar\xi\wedge\Phi^*\psi\\
 &=\int_{\Delta_s}2a(\xi,0)
       \left(\int_{\Gamma_\xi}\psi\right)\area(\xi).
\end{align*}
This is \eqref{eq:laminar}. The graphs are disjoint, are
defined over a common disk, and have uniformly bounded slopes,
so the integral is locally finite. Each holomorphic plaque has
no boundary inside $W$, hence its integration current is
positive and closed there.
\end{proof}

To realize Definition~\ref{def:uniform}, restrict to
$W_* = \Phi(\Delta_{s/2}\times\Delta_{\rho/4})\Subset W$.
By \eqref{eq:Hrange} and \eqref{eq:biholo}, the graphs over
$\Delta_{\rho/2}$ with $|\xi|\le3s/4$ are disjoint in the common
larger bidisk $\Delta_{R/8}\times\Delta_{\rho/2}$ and restrict to
\eqref{eq:laminar} on $W_*$. Their holomorphic extensions to $\Delta_\rho$ provide the
Cauchy estimates.

To verify compatibility, consider overlapping flow boxes $\Phi_j$,
$j=1,2$. Let $a_j$ be the transverse Levi coefficient in the $j$th
adapted chart and set $\alpha_j(\xi)=a_j(\xi,0)$. Formula
\eqref{eq:pullTfinal} gives
$\Phi_j^*T=2\alpha_j(\xi)\,d(\operatorname{Re}\xi)
\wedge d(\operatorname{Im}\xi)$.
After shrinking to a product neighborhood in the overlap, uniqueness
of leaf germs writes $F=\Phi_2^{-1}\circ\Phi_1$ as
$F(\xi,w)=(H(\xi),\psi(\xi,w))$. The holonomy map $H$ and its
inverse are locally Lipschitz by restriction of the transverse
components of these transition maps to fixed leaf-coordinate slices.
Since both flow boxes preserve real orientation and the leafwise
transition is a local biholomorphism, $H$ preserves transverse
orientation.

Bi-Lipschitz maps preserve null sets in both directions, so the
almost-everywhere chain rule gives $\Phi_1^*T=F^*(\Phi_2^*T)$.
Comparing transverse coefficients and applying Fubini's theorem,
\begin{equation}\label{eq:holonomy-density}
 \alpha_1(\xi)
 =\alpha_2(H(\xi))\,\bigl|\det_{\R}DH(\xi)\bigr|
 \quad\text{for almost every }\xi.
\end{equation}
The change-of-variables formula therefore yields
\begin{equation}\label{eq:holonomy-measure}
 H_*\bigl(2\alpha_1(\xi)\,dA(\xi)\bigr)
   =2\alpha_2(\eta)\,dA(\eta)
\end{equation}
on the corresponding local transversals. By composition this proves
holonomy invariance. The local transverse measures are thus compatible
and $T|_{\mathcal R}$ is uniformly laminar, completing the proof of
Theorem~\ref{thm:main}(iii).

\subsection{The laminarity corollary}

\begin{lemma}[Full-trace-mass criterion]\label{lem:fullmasscriterion}
Let $S$ be a positive closed $(1,1)$-current on $U\subset\C^2$.
If an open set $V\subset U$ has full $S$-trace mass and
$S|_V$ is locally uniformly laminar, then $S$ is laminar on $U$
in the sense of Definition~\ref{def:laminar}.
\end{lemma}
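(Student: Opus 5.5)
The plan is to produce, for given $U\Subset U$ and $\varepsilon>0$, an open set $U_\varepsilon$ and a current $S_\varepsilon$ directly by inner regularity of the trace measure. Here \emph{full $S$-trace mass} means that the trace measure $\mu=S\wedge\omega_0$ satisfies $\mu(U\setminus V)=0$. Fix a relatively compact open $U'\Subset U$ and $\varepsilon>0$. We will take
\[
 U_\varepsilon=U'\cap V,\qquad S_\varepsilon=S|_{U_\varepsilon}.
\]
Then $S_\varepsilon$ is a positive closed current on its own open domain $U_\varepsilon$. It is locally uniformly laminar there, because uniform laminarity in Definition~\ref{def:uniform} is a local condition near support points. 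Near any point of $U_\varepsilon$, the flow boxes for $S|_V$ can be shrunk inside $U_\varepsilon$, and the representation \eqref{eq:defuniform} restricts to the smaller box. No closedness across $\partial U_\varepsilon$ is required by Definition~\ref{def:laminar}.

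Next we check the comparison $0\le\widetilde S_\varepsilon\le S|_{U'}$. The zero extension $\widetilde S_\varepsilon$ equals $\mathbf 1_{U_\varepsilon}S$, meaning the measure coefficients of $S$ are multiplied by the indicator of $U_\varepsilon$. A positive $(1,1)$-current has measure coefficients that are absolutely continuous with respect to $\mu$. Hence both $\mathbf 1_{U_\varepsilon}S$ and $\mathbf 1_{U'\setminus U_\varepsilon}S$ are positive, and this gives the comparison.

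The defect in \eqref{eq:deflaminar} is then
\[
 \int_{U'}(S-\widetilde S_\varepsilon)\wedge\omega_0=\mu(U'\setminus V)=0<\varepsilon.
\]
Since $U'\Subset U$, the trace mass $\mu(U')$ is finite, so this quantity is finite and vanishes. No exhaustion is actually needed. If one prefers $U_\varepsilon\Subset V$, choose compact sets $K_j\uparrow U'\cap V$; then $\mu(U'\cap V\setminus K_j)\to0$ by finiteness of $\mu(U')$, and we take $U_\varepsilon$ to be an open neighborhood of $K_j$ inside $U'\cap V$.

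The only delicate point is justifying that $\mathbf 1_{U_\varepsilon}S$ is a well-defined positive current dominated by $S$. I would handle it by writing $S=\sum S_{j\bar k}\,i\,dz_j\wedge d\bar z_k$, where the $S_{j\bar k}$ are complex measures with $|S_{j\bar k}|\le C\mu$ and the matrix $(S_{j\bar k})$ is positive semidefinite $\mu$-a.e. in Radon–Nikodym density. Multiplying by an indicator preserves these properties.

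In the application to Corollary~\ref{cor:fullmass}, with $V=\mathcal R$, full trace mass holds because $T$ has continuous coefficients vanishing on $\mathcal Z$, so $\mu(\mathcal Z)=\int_{\mathcal Z}\operatorname{tr}H_u\,d\mathrm{Leb}=0$.
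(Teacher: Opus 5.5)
Your proposal is correct and is essentially the paper's proof. The paper also takes $U_\varepsilon=U'\cap V$ and $S_\varepsilon=S|_{U_\varepsilon}$, and uses absolute continuity of the coefficients with respect to the trace measure to identify the zero extension with $S|_{U'}$, so the defect is $\mu(U'\setminus V)=0$. Your extra remarks (shrinking flow boxes, the optional compact exhaustion by $K_j$) only elaborate that argument, and the opening ``$U\Subset U$'' should read $U'\Subset U$.
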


\begin{proof}
For a relatively compact open $U_0\Subset U$, choose
$U_\varepsilon=U_0\cap V$ and
$S_\varepsilon=S|_{U_\varepsilon}$. The coefficients of a
positive current are absolutely continuous with respect to its
trace measure. Since $S$ gives no trace mass to $U\setminus V$,
the zero extension of $S_\varepsilon$ to $U_0$ equals $S|_{U_0}$.
Hence the difference in \eqref{eq:deflaminar} has zero trace mass.
\end{proof}

\begin{proof}[Proof of Corollary~\ref{cor:fullmass}]
By Theorem~\ref{thm:main}(iii), $T|_{\mathcal R}$ is uniformly
laminar. Since $u\in C^2$, the trace measure $\mu_T=T\wedge\omega_0$ has
a continuous nonnegative density proportional to
$\operatorname{tr}H_u$. That density vanishes identically on the
rank-zero set $\mathcal Z=\Omega\setminus\mathcal R$, so
\[
 \mu_T(\mathcal Z)=0.
\]
Lemma~\ref{lem:fullmasscriterion}, with $S=T$ and $V=\mathcal R$,
therefore shows that $T$ is laminar on $\Omega$.
\end{proof}

\subsection{A smooth example}\label{sec:example}

\begin{proof}[Proof of Proposition~\ref{prop:nonuniform}]
Let $\Omega$ be a bidisk centered at the origin and set
$u(z,w)=|zw|^2$ and $T=\ddc u$. A direct calculation gives
\[
 T=i\,(w\,dz+z\,dw)\wedge(\bar w\,d\bar z+\bar z\,d\bar w),
 \qquad
 H_u=\begin{pmatrix}|w|^2&w\bar z\\z\bar w&|z|^2\end{pmatrix}.
\]
Thus $u$ is smooth and plurisubharmonic, $T\wedge T=0$,
and $T$ has rank one on $\Omega\setminus\{0\}$ and rank zero at
$0$. The trace measure has continuous density proportional to
$|z|^2+|w|^2$, so $\supp T=\Omega$ and
$(T\wedge\omega_0)(\{0\})=0$.

On $\Omega\setminus\{0\}$, the level sets $zw=c$ locally form
a holomorphic foliation, since $w\,dz+z\,dw$ is nowhere zero there.
They give locally uniformly laminar representations for $T$.
Since the rank-zero set has zero trace mass,
Lemma~\ref{lem:fullmasscriterion} shows that $T$ is laminar on
$\Omega$. More explicitly, the identity of currents
\[
 T=2\int_{\C}[\{zw=c\}\cap\Omega] \,dA(c)
\]
holds locally on $\Omega$; the singular level $c=0$ has zero
transverse measure.

We show next that $T$ is not uniformly laminar in any neighborhood
of the origin. Suppose to the contrary that there is such a
neighborhood. On a smaller flow box, uniform laminarity gives
\[
 T=\int_K[\Gamma_a] \,d\nu(a),\qquad K=\supp\nu,
\]
where the plaques are disjoint holomorphic graphs whose union is
$\supp T$ in the flow box. Their tangent lines define a continuous
line field $L$ there: continuity of the graphs in the transverse
parameter, together with Cauchy estimates on a common larger disk,
gives continuity of their derivatives.

Since $T$ has continuous coefficients, we may wedge it with each
plaque current. Positivity and
\[
 0=T\wedge T=\int_K T\wedge[\Gamma_a] \,d\nu(a)
\]
show that the restriction of $T$ to $\Gamma_a$ vanishes for
$\nu$-almost every $a$. Continuity along the plaques upgrades the
vanishing to every point of each such plaque. These parameters are
dense in $K$, so continuity of $T$ and of the plaque tangents
implies $T_x|_{L_x}=0$ at every point in the smaller flow box.
At positive-rank points, therefore, $L_x=\ker T(x)$.

But on the positive-rank locus
\[
 \ker T(z,w)=\{(v_1,v_2):wv_1+zv_2=0\}=\C(z,-w).
\]
In particular,
\[
 \ker T(z,0)=\C(1,0)\quad(z\ne0),\qquad
 \ker T(0,w)=\C(0,1)\quad(w\ne0).
\]
These two lines have different limits as $(z,w)\to(0,0)$ along
the respective coordinate axes. Hence the null-line field cannot
extend continuously through the origin, contradicting the
continuity of $L$. This proves the failure of uniform laminarity.
\end{proof}

\begin{remark}
The example has
$\operatorname{Int}(\supp T)=\Omega$ but $\rank T(0)=0$:
points in the interior of the support of a continuous positive
form need not have positive rank. In the proof of
\cite[Theorem~3.5]{DG}, rank one is used throughout the interior
of the support. This is a minor oversight in that argument;
the classical Frobenius conclusion applies on the positive-rank
locus.
\end{remark}

\subsection*{Declaration on the use of AI assistance}
ChatGPT assisted with the exploration of proof ideas and with the
drafting and editing of this manuscript. The authors take full
responsibility for the mathematical results, all proofs, and the
final content of the paper.

\end{document}